\def\bb{\mathbb}
\newtheorem{Lemma}{Lemma}[section]
\newtheorem{Th}[Lemma]{Theorem}
\newtheorem{Prop}[Lemma]{Proposition}
\newtheorem{Cor}[Lemma]{Corollary}
\newtheorem*{Def}{Definition}
\newtheorem*{theorem}{Main Theorem}
\newtheorem*{corollary}{Corollary}
\newtheorem*{corollary 2}{Corollary 2}
\newtheorem{conjecture}{Conjecture}
\begin{document}
\title[Normal Subgroups of Division Rings]{On normal subgroups of division rings which are \\radical over a proper division subring}
\dedicatory{Dedicated to Professor Bui Xuan Hai on his 60th birthday}

\author{Mai Hoang Bien}
\email{maihoangbien012@yahoo.com}
\address{Department of Basic Sciences, University of Architecture, 196 Pasteur Street, District 1, HCM City, Vietnam.}

\author{Duong Hoang Dung}
\email{dhdung1309@gmail.com, dhoang@math.uni-bielefeld.de}
\address{Fakult\"{a}t f\"{u}r Mathematik, Universit\"{a}t Bielefeld,
Postfach 100131, D-33615 Bielefeld, Germany.}

 \keywords{Divsion ring, Kurosh element, centrally finite, locally finite. \\ \protect \indent 2010 {\it Mathematics Subject Classification.} 16K20, 16K40.}
\begin{abstract} 
We introduce Kurosh elements in division rings based on the idea of a conjecture of Kurosh.
Using this, we generalize a result of Faith in \cite{Faith} and of Herstein in \cite{Herstein1}.
\end{abstract}
\maketitle

\section{Introduction}\label{1}
The structure properties of multiplicative subgroups  in division rings have been  recently studied  such as free subgroups (\cite{GT12, MM11}), maximal subgroups (\cite{ DBH12,KM10, KM09, M02}), subgroups radical over a set (\cite{AEM03,DBH12, Herstein1,MA96, MM11,M00}), etc. In this paper, we generalize one of Faith's works on division rings which are radical over a proper subring. The best well known result concerning the radicality in division rings is a result of Kaplansky, saying that every division ring which is radical over its center is in fact commutative (\cite[Theorem 15.15]{Lam-first-course}). Faith generalized this result by proving in \cite{Faith} that  every division ring which is radical over a proper subring is also commutative. Here, an element $x$ of a division ring $D$ is  {\it radical} over a subring $K$ of $D$ if there exists a positive integer $n_x$ such that $x^{n_x}\in K$. A subset of $D$ is  {\it radical} over $K$ if all of its elements are radical over $K$.  In group-theoretical language, Faith's Theorem said that if the multiplicative group $D^*$ is radical over $K$ then $D^*$ is commutative. The goal of this paper is to consider whether the result holds when $D^*$ is replaced by a normal subgroup $N$ of $D^*$; that is, whether it is true that if a normal subgroup $N$ of $D^*$  is radical over a proper subring $K$ then $N$ is commutative. Unfortunately, in general, if $K$ is not a division subring, then it fails to be true. In the end of Section 4, we provide an example to illustrate this. We formulate the following conjecture.
\begin{conjecture} \label{Bien conjecture}
Let $D$ be a division ring and $K$ a proper division subring of $D$. Then every normal subgroup of $D^*$ radical over $K$ is commutative.
\end{conjecture}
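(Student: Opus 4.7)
My plan is to prove the stronger statement $N \subseteq Z(D)$, which immediately implies commutativity. Suppose for contradiction that $N \not\subseteq Z(D)$ and pick $x \in N \setminus Z(D)$. By normality of $N$, every conjugate $d^{-1} x d$ with $d \in D^*$ lies in $N$, and therefore the division subring $\Delta(x)$ generated by the conjugacy class of $x$ is closed under conjugation by $D^*$. The Cartan--Brauer--Hua theorem then forces $\Delta(x) \subseteq Z(D)$ or $\Delta(x) = D$; the first would put $x$ into $Z(D)$, contradicting the choice of $x$, so $\Delta(x) = D$.

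Next, I would translate ``$N$ is radical over $K$'' into a structural statement about the conjugates of $x$. Each $y$ in the conjugacy class $C(x) = \{d^{-1} x d : d \in D^*\}$ satisfies $y^{n_y} \in K$ for some $n_y$, so $y$ is algebraic over the field $Z(K)$ as a root of $t^{n_y} - y^{n_y}$. The goal is then to show that radicality over the proper division subring $K$ forces each such $y$ to be a Kurosh element in the sense introduced earlier in the paper, and then to apply the structural properties of Kurosh elements. Once every element of $C(x)$ is Kurosh and $\Delta(x) = D$, the whole division ring $D$ becomes locally finite (hence algebraic) over its center $Z(D)$. A Jacobson--Kaplansky type theorem then forces $D$ to be commutative, so $x \in Z(D)$, the desired contradiction. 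The edge case $N \subseteq K$ is separate but easier: the division subring generated by $N$ is then $D^*$-invariant and proper, so by Cartan--Brauer--Hua again $N \subseteq Z(D)$.

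The main obstacle I foresee is the implication \emph{radical over $K$} $\Rightarrow$ \emph{Kurosh}. The exponents $n_y$ are a priori unbounded as $y$ ranges over $C(x)$, and $K$ is allowed to be non-central, so algebraicity over $K$ does not immediately upgrade to finite-dimensionality over $Z(D)$ of the envelope generated by the conjugates. The bridge should exploit precisely the hypothesis that $K$ is a \emph{proper} division subring, together with normality: one would combine a Stuth-type argument (a normal subgroup of $D^*$ sitting in a proper division subring is central) with careful control of the exponents appearing in the radicality condition, to bound the degree of the minimal polynomial of each $y$ over $Z(D)$. Working out this bridge cleanly is where the bulk of the technical effort, and the novelty afforded by the Kurosh-element framework, will go.
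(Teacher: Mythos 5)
Your proposal cannot be checked against a proof in the paper, because the statement you are trying to prove is precisely the paper's open Conjecture~1: the authors prove only the partial result (Theorem~3.4) that if $N$ is radical over a proper division subring $K\supseteq F$ then $N\setminus K$ contains no Kurosh element, and they deduce the conjecture only for locally finite $D$ (Corollary~3.5). The bulk of your sketch, by your own admission, rests on the unproved bridge \emph{radical over $K$} $\Rightarrow$ \emph{Kurosh}. That bridge is exactly the open content of the conjecture, and nothing in the paper supplies it: the paper's logic runs in the opposite direction (given that an element is Kurosh, radicality forces it into $K$ via Lemma~3.3 applied inside a centrally finite subring); it never manufactures Kurosh elements from the radicality hypothesis. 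Note also a small but real slip: $y^{n_y}\in K$ makes $y$ radical over $K$, but it does not make $y$ algebraic over the field $Z(K)$, since $y^{n_y}$ need not lie in $Z(K)$; a relation $t^{n_y}-y^{n_y}$ has coefficients in $K$, which is noncommutative, so no field-theoretic degree bound follows.

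A second step in your plan is not merely unproved but false, and the paper itself contains the counterexample. You claim that once every element of $C(x)$ is Kurosh and $\Delta(x)=D$, the ring $D$ ``becomes locally finite (hence algebraic).'' Being Kurosh is a one-element-at-a-time condition --- each element sits in \emph{some} centrally finite division subring, with no compatibility between the subrings attached to different elements and no uniform bound on dimensions --- and Section~4 of the paper constructs a division ring $D_\alpha$ in which every noncentral element is Kurosh while $D_\alpha$ is not even algebraic over its center (the element $\alpha=x_1^{-1}+x_2^{-1}+\cdots$ is transcendental over $F$). So the Jacobson--Kaplansky-style conclusion you want to invoke at the end cannot be reached along this route. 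The parts of your sketch that do work (the Cartan--Brauer--Hua/Herstein--Scott reduction when $N\subseteq K$, and the observation that normal plus commutative implies central) are exactly the reductions the paper already makes; what remains in between is the open problem, not a technical detail to be ``worked out cleanly.''
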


In particular, when $K$ is the center of $D$, then Conjecture \ref{Bien conjecture} is a conjecture of Herstein in \cite{Herstein1}:
\begin{conjecture}\label{Herstein}\cite{Herstein1} Let $D$ be a division ring with center $F$ and $N$ a normal subgroup of $D^*$. If $N$ is radical over $F$ then $N\subseteq F$.
\end{conjecture}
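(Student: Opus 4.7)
The plan is to argue by contradiction: pick $x\in N\setminus F$, and combine the normality of $N$ with the radicality hypothesis to force so much algebraicity on $D$ that classical commutativity theorems (Kaplansky, Jacobson--Herstein) will deliver $D=F$, contradicting $x\notin F$.

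The first step records the key rigidity afforded by radicality. Choose $n=n_x\geq 1$ with $x^n\in F$. For every $d\in D^*$, normality of $N$ yields $dxd^{-1}\in N$, and because $x^n\in F$,
\[
(dxd^{-1})^n \;=\; d x^n d^{-1} \;=\; x^n,
\]
so every conjugate of $x$ is a root of $t^n-x^n\in F[t]$ and hence algebraic over $F$ of degree at most $n$. In the same spirit, every commutator $[x,d]=x\,(dxd^{-1})^{-1}$ lies in $N$ and therefore has some power in $F$, which gives a second reservoir of algebraic elements of $D$ manufactured from $x$ and the normality of $N$.

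The second step, which I expect to be the crux, is to amplify these bounded-degree algebraicities to all of $D$. Here I would invoke a Stuth / Cartan--Brauer--Hua type statement: a noncentral normal subgroup of $D^*$ generates $D$ as an $F$-division ring. Combined with Step 1, this would make $D$ generated over $F$ by algebraic elements of uniformly bounded degree, hence a PI algebra over its center; Kaplansky's theorem then forces $D$ to be centrally finite, and in that situation the conclusion is classical through the reduced norm, Skolem--Noether, and the structure of normal subgroups of $\mathrm{GL}_1$ of a centrally finite division ring.

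The main obstacle is that Stuth-type density arguments usually require some prior algebraicity of $D$ over $F$, which is not given. A natural workaround is to proceed locally: for every finite tuple $y_1,\ldots,y_r\in D^*$, consider the division subring $D_0\subseteq D$ generated by $x,y_1,\ldots,y_r$, verify that $N\cap D_0^*$ is still normal in $D_0^*$ and radical over $F\cap Z(D_0)$, and reduce inductively to the centrally finite case inside $D_0$. The Kurosh elements advertised in the abstract seem tailor-made to make this local-to-global reduction work, by certifying that an appropriate finitely generated division subring captures the obstruction to centrality.
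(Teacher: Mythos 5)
You are attempting to prove a statement that the paper itself presents as Conjecture~\ref{Herstein} and explicitly describes as still open in general; the paper never proves it, and its actual contributions are conditional or special-case results (Lemma~\ref{radical in center} for centrally finite $D$, Theorem~\ref{main theorem} assuming a Kurosh element exists, Corollary~\ref{conjectures hold locally finite} for locally finite $D$, Corollary~\ref{generalize Herstein theorem 9} for torsion elements). So the question is not whether your route differs from the paper's, but where your sketch breaks, and it breaks at exactly the known obstruction. In Step 2 you pass from ``$D$ is generated over $F$ by the conjugates of $x$, each algebraic of degree at most $n$'' (the generation statement itself is fine, by Cartan--Brauer--Hua/Herstein--Scott) to ``$D$ is PI, hence centrally finite by Kaplansky.'' That inference is unjustified: the Jacobson--Kaplansky theorems on algebraic algebras of bounded degree require \emph{every} element of the algebra to be algebraic of bounded degree, whereas sums and products of algebraic elements of a division ring need not be algebraic at all --- this is precisely the content of the open Kurosh problem that motivates the paper. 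Your ``second reservoir'' of commutators does not repair this, because the exponent $n_{[x,d]}$ with $[x,d]^{n_{[x,d]}}\in F$ depends on $d$ and is unbounded, so no uniform identity is available.

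The local workaround in your final paragraph has the same hole, stated in different words: the finitely generated division subring $D_0=F(x,y_1,\ldots,y_r)$ is not known to be centrally finite (or even algebraic) --- that is verbatim Kurosh's conjecture --- and the paper's Kurosh-element machinery does not \emph{manufacture} such a $D_0$. Rather, Theorem~\ref{main theorem} \emph{assumes} that some element of $N\setminus K$ is Kurosh, i.e.\ already lies non-centrally in a centrally finite division subring, and derives a contradiction from Lemma~\ref{generalized Herstein}; the unconditional consequences are therefore confined to settings where Kurosh elements are guaranteed (locally finite $D$; torsion elements via Proposition~\ref{Kurosh torsion}; the classes in Section~2). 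Indeed, Section~4 constructs a division ring in which every non-central element is Kurosh yet $D$ is not algebraic, showing the Kurosh property is genuinely weaker than what your Step~2 would need. Your centrally finite endgame is sound --- it is essentially the paper's Lemma~\ref{radical in center}, proved there via Herstein's Sublemma and Theorem~9 together with Brauer--Cartan--Hua rather than reduced norms and Skolem--Noether --- but the reduction to that case is the entire difficulty, and with the tools at hand it cannot be closed; a complete proof of your outline would resolve an open conjecture.
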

In fact, Conjecture \ref{Herstein} was orginally stated in \cite{Herstein1} for subnormal subgroups $N$ of $D^*$. However, subnormal subgroups can be replaced by normal ones by \cite[Lemma 1]{Herstein2} and because that if $N$ is normal and commutative in $D^*$ then $N$ is contained in the center of $D$ \cite[14.4.4]{S88}. Conjecture \ref{Herstein} is still open in general. We would like to study Conjecture \ref{Bien conjecture} for the general case. Our main techniques come from Kurosh's Conjecture for division rings.

Let $D$ be a division ring with center $F$. If $D$ is a finite dimensional vector space over $F$ then we say that $D$ is {\it centrally finite} (see \cite[Defnition 14.1]{Lam-first-course}). An element $a$ in $D$ is {\it non-central} if $a\notin F$. It is {\it algebraic} (over the center $F$) if it is a root of some polynomials over $F$. If all elements of $D$ are algebraic then we say that $D$ is {\it algebraic} (over the center $F$). One says $D$ {\it locally finite} (see \cite{Zelmanov}) if the division subring $F(S)$ generated by $F$ and a finite subset $S$ of $D$ is a finite dimensional vector space over $F$. Hence, the class of locally finite division rings contains the class of centrally finite ones and is contained in the class of algebraic ones. There are examples of locally finite division rings which are not centrally finite \cite{Lam-first-course}. In 1941, Kurosh conjectured (see \cite{Kurosh41,Zelmanov}) that every algebraic division ring is locally finite. In general the Kurosh conjecture is still open.  Assume that $D$ is algebraic and if this conjecture of Kurosh holds, then for any non-central element $x$ of $D$, there exists a division subring of $D$ containing $x$ as a non-central element. Using this idea, we introduce the following notion.

\begin{Def}
Let $D$ be a division ring. A non-central element $x$ of $D$ is {\it  Kurosh}  if there exists a centrally finite division subring of $D$ containing $x$ as a non-central element.
\end{Def} 

Note that if Kurosh's Conjecture holds then every non-central element of an algebraic division ring is a Kurosh element. In particular, every non-central element of a locally finite division ring is Kurosh. In Section 2, some classes of Kurosh elements in an arbitrary division ring will be described. Notice also that in the definition, one does not require $D$ to be algebraic. Moreover, if every non-central element of $D$ is Kurosh, it does not mean that $D$ is algebraic. In the last section of this paper, using the Mal'cev-Neumann's construction of Laurent series rings, we present an example of a division ring $D$ with the properties that all elements of $D$ are Kurosh but $D$ is not algebraic. The main result in this paper is the following.
\begin{theorem}[Theorem 3.4]
Let $D$ be a division ring with center $F$, $K$ a proper division subring of $D$ containing $F$ and $N$ a normal subgroup of $D^*$. If $N$ is radical over $K$ then $N\setminus K$ does not contain any Kurosh element.
\end{theorem}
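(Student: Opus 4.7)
The plan is a proof by contradiction combined with a reduction to the centrally finite setting. Suppose, contrary to the claim, that some Kurosh element $x$ lies in $N \setminus K$. By the definition of Kurosh, there is a centrally finite division subring $L \subseteq D$ with $x \in L \setminus Z(L)$.

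Next I would transfer the hypotheses into $L$. Set $N_L := N \cap L^*$ and $K_L := K \cap L$. Conjugation by $L^* \subseteq D^*$ preserves $N$, hence also $N_L$, so $N_L \triangleleft L^*$. An intersection of division subrings is a division subring, so $K_L$ is a division subring of $L$, and $K_L$ is proper in $L$ because $x \in L \setminus K$. Radicality is inherited: for $y \in N_L$ the power $y^{n_y}$ produced by radicality of $N$ over $K$ lies in $K \cap L = K_L$. Thus inside the centrally finite division ring $L$ we have a normal subgroup $N_L$ of $L^*$ radical over a proper division subring $K_L$, with $x \in N_L \setminus Z(L)$. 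The problem reduces to the following centrally finite statement: if $L$ is centrally finite and $N_L \triangleleft L^*$ is radical over a proper division subring $K_L$, then $N_L \subseteq Z(L)$.

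My strategy for the centrally finite statement is to force all of $L^*$ to be radical over $K_L$; Faith's theorem (every division ring whose multiplicative group is radical over a proper subring is commutative) would then give $L = Z(L)$, contradicting $x \notin Z(L)$. The first step in spreading radicality from $N_L$ to $L^*$ is the Cartan--Brauer--Hua theorem applied to the division subring $L' \subseteq L$ generated by $N_L$: since $N_L$ is normal in $L^*$, so is $(L')^*$, whence either $L' \subseteq Z(L)$ (already giving $x \in Z(L)$, contradiction) or $L' = L$. In the remaining case, $L$ is built from $N_L$ by division-ring operations.

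The main obstacle is precisely the last step: radicality behaves well under multiplication and inversion but poorly under addition, so promoting ``each element of $N_L$ has a power in $K_L$'' to ``each element of $L$ has a power in $K_L$'' is delicate. To overcome it, I would exploit the centrally finite structure of $L$ directly: every element satisfies a reduced characteristic polynomial of bounded degree over $Z(L)$, and the $Z(L)$-span of the conjugates of $x$ fills out a nonzero $L$-bimodule, forcing it to be all of $L$ by simplicity. Combined with the radicality of each conjugate of $x$ over $K_L$, an inductive argument on $[L : Z(L)]$ --- passing to the centralizer $C_L(x)$ or to a maximal subfield through $x$, where commutative field theory and the inductive hypothesis apply --- should yield radicality of $L^*$ over $K_L$ and so complete the proof.
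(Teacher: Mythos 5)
Your opening reduction is exactly the paper's proof of Theorem 3.4: take a Kurosh element $x\in N\setminus K$, pass to a centrally finite division subring $D_x$ containing $x$ non-centrally, and check that $N_x=N\cap D_x^*$ is normal in $D_x^*$, radical over $K_x=K\cap D_x$, and that $K_x$ is proper since $x\notin K$. All of that is correct and verbatim what the paper does. But this reduction is the easy half; the entire content of the theorem is the centrally finite statement you isolate (the paper's Lemma 3.3: in a centrally finite division ring, a normal subgroup radical over a proper division subring lies in the center), and there your argument has a genuine gap. Your plan is to show $L^*$ itself is radical over $K_L$ and invoke Faith's theorem, but the final step --- ``an inductive argument on $[L:Z(L)]$ \dots should yield radicality of $L^*$ over $K_L$'' --- is an unsubstantiated hope, not a proof. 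You yourself identify the obstacle (radicality does not survive addition, and $L$ is generated from $N_L$ using sums), and nothing in the sketch overcomes it: there is no mechanism forcing a power of a \emph{sum} of elements of $N_L$ into $K_L$. The auxiliary claim that the $Z(L)$-span of the conjugates of $x$ is an $L$-bimodule is also unjustified (conjugation-invariance gives neither left nor right $L$-stability), and even granting it, it would not produce powers of general elements of $L$ in $K_L$.

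The paper's route through the centrally finite core is quite different and is precisely designed to sidestep the addition obstacle: rather than spreading radicality to all of $L^*$, it spreads \emph{power-commutation} within $N$. Given $a,b\in N\setminus K$ with $a^nb\neq ba^n$ for all $n$, one uses normality to form the conjugates $x=(a+b)a(a+b)^{-1}$ and $y=(b+1)a(b+1)^{-1}$ in $N$, takes a common $m$ with $x^m,y^m\in K$, and exploits the linear identity $(x^m-y^m)b=a^m(a-1)+y^m-x^ma$: if $x^m\neq y^m$ this solves for $b\in K$, a contradiction, and otherwise $a^mb=ba^m$. (Short supplementary arguments handle $b\in K$ or $a\in K$.) Thus any two elements of $N$ power-commute, i.e.\ each is radical over the centralizer of the other; Lemma 3.2 then passes to the subrings $F(x,y)$ to conclude $N\cap F(x,y)$ is radical over the center, and Lemma 3.1 --- resting on Herstein's results that in a centrally finite division ring central powers force roots of unity and that torsion commutators are central, plus Cartan--Brauer--Hua --- finishes with $N\subseteq Z(L)$. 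This Faith--Herstein conjugation trick is the missing idea in your proposal; without it (or a genuinely new substitute for your unproved spreading step), your argument does not close.
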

As a consequence, it generalizes a result in \cite{Herstein1} (see Corollary \ref{generalize Herstein theorem 9}). Moreover,  we have that (see Corollary \ref{conjectures hold locally finite}):
\begin{corollary}
The Conjecture \ref{Bien conjecture}, and hence the Conjecture \ref{Herstein}, holds when $D$ is locally finite.
\end{corollary}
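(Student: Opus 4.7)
The plan is to derive the corollary from the Main Theorem (Theorem~3.4) together with the classical Brauer--Cartan--Hua theorem. Let $D$ be a locally finite division ring with center $F$, let $K$ be a proper division subring of $D$, and let $N\trianglelefteq D^*$ be a normal subgroup radical over $K$; the goal is to show $N$ is abelian. The first step would be to reduce to the case $F\subseteq K$ by replacing $K$ with the division subring $K'$ of $D$ generated by $K\cup F$: since $N$ is radical over $K\subseteq K'$, it is a fortiori radical over $K'$, and by construction $F\subseteq K'$. The sub-claim required here is that $K'$ is still proper (otherwise the Main Theorem does not apply); in the locally finite setting one views $K'$ as a union of centrally finite subrings $F(S)$ for finite $S\subseteq K$, and a dimension-counting argument together with properness of $K$ should rule out $K'=D$. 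I anticipate this reduction to be the main technical point.

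Granting it, the Main Theorem asserts that $N\setminus K'$ contains no Kurosh element. But in a locally finite $D$, every non-central $x\in D$ is Kurosh: choosing any $d\in D$ that does not commute with $x$, the subring $F(x,d)$ is finite-dimensional over $F$ by local finiteness, hence centrally finite, and it contains $x$ non-centrally (since $d$ witnesses $x\notin Z(F(x,d))$). Therefore every element of $N\setminus K'$ must lie in $F\subseteq K'$, forcing $N\setminus K'=\emptyset$ and hence $N\subseteq K'^*\subsetneq D^*$.

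Finally I would invoke the Brauer--Cartan--Hua theorem. Let $L$ be the division subring of $D$ generated by $N$. Since $N\trianglelefteq D^*$ and each conjugation $x\mapsto dxd^{-1}$ is a ring automorphism of $D$, the subring $L$ is stable under $D^*$-conjugation, so $L^*\trianglelefteq D^*$. Brauer--Cartan--Hua then forces $L\subseteq F$ or $L=D$; since $L\subseteq K'\subsetneq D$, we conclude $L\subseteq F$. Therefore $N\subseteq F^*$, which is abelian, proving Conjecture~\ref{Bien conjecture} in the locally finite case. Conjecture~\ref{Herstein} then follows as the special case $K=F$, for which the initial reduction is trivial.
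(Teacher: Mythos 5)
The fatal flaw is your first step. The sub-claim that $K'$, the division subring generated by $K\cup F$, is still proper is simply false in general, and no dimension-counting argument can rescue it. Take $D=\mathbb{H}$, the real quaternions (centrally finite, hence locally finite), and $K=\mathbb{Q}\oplus\mathbb{Q}i\oplus\mathbb{Q}j\oplus\mathbb{Q}k$, the rational quaternions: $K$ is a proper division subring, but since $F=\mathbb{R}$, the subring generated by $K\cup F$ already contains the $\mathbb{R}$-span of $1,i,j,k$, so $K'=D$. Thus Theorem~\ref{main theorem} cannot be applied to $K'$, and your steps 2 and 3 --- which are both correct in themselves; in particular your argument that every non-central element of a locally finite division ring is Kurosh via $F(x,d)$ is exactly right --- have nothing to stand on.

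The repair is not to enlarge $K$ but to notice that the hypothesis $F\subseteq K$ in Theorem~\ref{main theorem} is never used in its proof: the theorem localizes at a Kurosh element $x$ to a centrally finite division subring $D_x$ and applies Lemma~\ref{generalized Herstein} to $K_x=K\cap D_x$, which is a proper division subring of $D_x$ merely because $x\notin K$; no containment of centers is needed. This is how the paper's one-line proof proceeds: apply the theorem to $K$ itself, so that $N\setminus K$ contains no Kurosh element, hence $N\setminus K\subseteq F$, i.e.\ $N\subseteq K\cup F$. One residual point deserves care (the paper elides it by writing $N\subseteq K$): since possibly $F\not\subseteq K$, you only get $N\subseteq K\cup F$, and the clean finish is a conjugacy-class version of your own step 3. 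For $a\in N\setminus F$, every $D^*$-conjugate of $a$ is again non-central, hence lies in $N\setminus F\subseteq K$; the division subring generated by the class $a^{D^*}$ is invariant under all inner automorphisms and contains the non-central element $a$, so by Brauer--Cartan--Hua it equals $D$, contradicting its containment in $K\subsetneq D$. Hence $N\subseteq F$. (The paper instead cites \cite{Herstein-Scott63} at this point; your Brauer--Cartan--Hua ending is equally valid here precisely because $N$ is normal in all of $D^*$, so the generated division subring is invariant under every inner automorphism.)
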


\noindent\textbf{Notations.} In this paper, unless otherwise stated, a division ring $D$ is always assumed to be non-commutative with center $F:=Z(D)$. Whenever we say that $N$ is a normal subgroup of $D$, we mean that $N$ is normal in the multiplicative subgroup $D^*$. All other notations in this paper are standard.
\section{Kurosh elements}
In this section, we will describe some classes of Kurosh elements in an arbitrary division ring. The following Lemma is elementary and the proof may be seen for instant in  {\cite{HDB12}}.
\begin{Lemma}\label{2.4} Let $D$ be a division ring. If $K$ is a subring of $D$ containing $F$ such that $K$ is a finite dimensional vector space over $F$ then $K$ is a (centrally finite) division ring.
\end{Lemma}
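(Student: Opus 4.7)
The plan is to show that nonzero elements of $K$ are invertible in $K$ by a dimension argument, and then to verify centrally finiteness as a cheap corollary.

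First I would fix any nonzero $x \in K$ and consider the left multiplication map $L_x \colon K \to K$, $y \mapsto xy$. Because $F \subseteq K$, every element of $K$ is an $F$-vector, and $L_x$ is manifestly $F$-linear. Since $D$ is a division ring it has no zero divisors, so $L_x$ is injective. Using the hypothesis $\dim_F K < \infty$, injectivity of an $F$-linear endomorphism of $K$ forces surjectivity, so there exists $y \in K$ with $xy = 1$. Running the same argument with right multiplication $R_x$ produces $z \in K$ with $zx = 1$, and then $z = z(xy) = (zx)y = y$, so $y$ is a two-sided inverse of $x$ lying in $K$. Thus $K$ is a division ring (and one sees incidentally that the inverse of $x$ in $K$ coincides with its inverse in $D$).

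It remains to note that $K$ is centrally finite as a division ring in its own right. Since every element of $F$ commutes with every element of $D$, and $F \subseteq K$, we have $F \subseteq Z(K)$. Consequently any $F$-basis of $K$ is a $Z(K)$-spanning set of $K$, giving
\[
\dim_{Z(K)} K \;\leq\; \dim_F K \;<\; \infty,
\]
so $K$ is centrally finite in the sense of the paper.

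I do not foresee a real obstacle here: the only ingredients are the absence of zero divisors in $D$, finite $F$-dimension of $K$, and the elementary fact $F \subseteq Z(K)$. The one place to be careful is distinguishing "inverse in $D$" from "inverse in $K$" before concluding that $K$ is closed under inversion; the rank-nullity argument above handles this automatically by producing the inverse inside $K$.
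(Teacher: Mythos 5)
Your proof is correct, and it is essentially the standard argument the paper itself omits (the authors simply cite \cite{HDB12} for it): injectivity of the $F$-linear left-multiplication map plus finite $F$-dimension yields a two-sided inverse inside $K$, and $F\subseteq Z(K)$ gives $\dim_{Z(K)}K\le\dim_F K<\infty$. Nothing is missing; your care in producing the inverse inside $K$ rather than merely in $D$ is exactly the right point to attend to.
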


\begin{Prop} \label{Kurosh torsion}
Every non-central torsion element of a division ring is a Kurosh element.
\end{Prop}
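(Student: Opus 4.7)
The plan is to exhibit a finite-dimensional $F$-subalgebra $K \subseteq D$ that contains $x$ together with some element that does not commute with $x$; by Lemma 2.1 (the paper's elementary lemma) such $K$ is automatically a centrally finite division subring of $D$ in which $x$ is non-central, so $x$ qualifies as a Kurosh element. I would split the construction into two steps.

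\textbf{Step 1: a non-commuting torsion conjugate.} First, I would produce $h \in D^*$ such that $y := hxh^{-1}$ does not commute with $x$. Note that $y$ is automatically torsion of the same order $n$ as $x$. To find such an $h$, I would apply Cartan--Brauer--Hua to the division subring $R$ generated by the $D^*$-conjugacy class $\{gxg^{-1} : g \in D^*\}$. This $R$ is closed under $D^*$-conjugation, and $x \in R \setminus F$, so Cartan--Brauer--Hua forces $R = D$. If every conjugate of $x$ commuted with $x$, the whole conjugacy class, and hence $R$, would sit inside the proper division subring $C_D(x)$, contradicting $R = D$. Thus some conjugator $h$ moves $x$ out of $C_D(x)$.

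\textbf{Step 2: a finite-dimensional subalgebra.} The heart of the argument is then to assemble a finite-dimensional $F$-subalgebra of $D$ containing $x$ and $h$. I would refine the choice of $h$ by invoking the classical fact that two elements of a division ring sharing the same minimal polynomial over the center are $D^*$-conjugate: since $x$ is torsion, $E := F(x)$ is a finite (cyclotomic, hence Galois) extension of $F$ of degree $d \geq 2$, and for any nontrivial $\sigma \in \mathrm{Gal}(E/F)$ the element $\sigma(x) \in E$ shares the minimal polynomial of $x$, so I can pick $h \in D^*$ with $hxh^{-1} = \sigma(x)$. Then $h$ normalizes $E$ and acts on it as $\sigma$, and $h^d$ lies in $C_D(E)$. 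Once one verifies that $h$ can be chosen with $h^d \in E$, the $E$-span $K := E + Eh + Eh^2 + \cdots + Eh^{d-1}$ is closed under multiplication (via $he = \sigma(e)h$), realizing a cyclic-algebra-type subring of $F$-dimension $d^2$. Lemma 2.1 then promotes $K$ to a centrally finite division subring of $D$; since $h \in K$ does not commute with $x$, we have $x \notin Z(K)$.

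\textbf{Main obstacle.} The delicate point is the ``closing up'' in Step 2: a priori $h^d$ lies only in $C_D(E)$, which may strictly contain $E$, and one must modify $h$ by a suitable element of $C_D(E)$ to arrange $h^d \in E$. This is a cocycle-type adjustment and is where the argument is most likely to require care. An alternative that sidesteps the adjustment would be to prove directly that the subgroup $\langle x, y\rangle \leq D^*$ generated by $x$ and a non-commuting torsion conjugate $y$ is finite (in the spirit of Herstein's results on periodic subgroups of division rings), from which $K := F\langle x, y\rangle$ is the $F$-span of a finite group and is therefore finite-dimensional, again letting Lemma 2.1 finish the proof.
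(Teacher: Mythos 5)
Your Step 2 contains the real gap, and you have located it yourself: nothing in your argument (or in the paper's toolkit) lets you modify $h$ by an element of $C_D(E)$ so as to force $h^d\in E$. Writing the adjustment out, you would need $c\in C_D(E)$ solving a twisted-norm equation of the shape $c\,\sigma(c)\cdots\sigma^{d-1}(c)\,h^d\in E$, where $\sigma$ acts on $C_D(E)$ by conjugation by $h$; here $C_D(E)$ is a possibly infinite-dimensional noncommutative division ring, and there is no solvability result to appeal to (in the classical cyclic-algebra setting one has $C_D(E)=E$ because $E$ is a maximal subfield of a finite-dimensional $D$ --- exactly what you do not have). Two secondary points: your Step 1 is redundant and slightly at odds with Step 2, since once $hxh^{-1}=\sigma(x)\in F(x)$ the conjugate $y=\sigma(x)$ is a \emph{power} of $x$ and does commute with $x$; what matters, and what you correctly use at the end, is that $h$ itself fails to commute with $x$. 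Your fallback alternative is also unsupported: the subgroup generated by two torsion elements of $D^*$ need not be periodic (a product of torsion elements can a priori have infinite order), and none of the cited Herstein results yields finiteness of $\langle x,y\rangle$.

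The paper's proof follows your construction up to the point where some power of the conjugator centralizes $F(a)$ (it takes $k$ with $\phi^k=\mathrm{id}_{F(a)}$, so $u^k\in C_D(F(a))$), and then sidesteps the closing-up problem entirely by changing the ground field instead of correcting $u$: it sets $K:=C_D(u^k)$, a division subring containing both $a$ and $u$ in which $u^k$ is central, and considers $D_1:=F_K[a,u]$ with $F_K:=Z(K)\ni u^k$. Since $ua=a^iu$, $a^n=1$ and now $u^k\in F_K$, the monomials $a^su^t$ with $0\le s<n$, $0\le t<k$ span $D_1$ over $F_K$, so $D_1$ is finite-dimensional over $F_K$; Lemma \ref{2.4} makes it a division ring, $\dim_{Z(D_1)}D_1\le\dim_{F_K}D_1<\infty$ makes it centrally finite, and $a\notin Z(D_1)$ because $ua\ne au$. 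The definitional flexibility you did not exploit is that a Kurosh element only requires a subring finite over \emph{its own} center, not over $F$: one may enlarge the base field to absorb $u^k$ rather than force $u^k$ into $F(a)$.
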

\begin{proof} Let $D$ be a division ring and $a\notin F$ a non-central torsion element of $D$. Let $n$ be the smallest positive integer such that $a^n=1$. Then the field extention $F(a)/F$ is a finite extention and $1,a,\cdots,a^{n-1}$ are all roots of the equation $x^n=1$. Hence the finite extension $F(a)/F$ is normal. It follows that the Galois group $G:=\textrm{Gal}(F(a)/F)$ is finite and non-trivial. Let $\phi\in G $ be a non-trivial $F$-automorphism of $F(a)$. Then $\phi(a)=a^i$ for some $i>1$. Moreover, $a$ and $\phi(a)$ have the same minimal polynomial over $F$. By Dickson's Theorem {\cite[16.8]{Lam-first-course}}, $a$ and $\phi(a)$ are conjugate in $D$, i.e., there exists an element $u\in D^*$ such that $\phi(a)=uau^{-1}$. For each $m$, $\phi(a^m)=(\phi(a))^m=(uau^{-1})^m=uau^{-1}\cdot uau^{-1}\cdots uau^{-1}=ua^mu^{-1},$ so that for every $x=\sum_m\alpha_ma^m\in F(a)$, one has $\phi(x)=\sum_m\alpha_m\phi(a^m)=\sum_m\alpha_mua^mu^{-1}=u(\sum_m\alpha_ma^m)u^{-1}=uxu^{-1}.$
Thus $\phi^t(x)=u^txu^{-t}$ for every integer $t>1$. Since $G$ is finite, there exists an integer $k>1$ such that $\phi^k=Id_{F(a)}$, and so $x=\phi^k(x)=u^kxu^{-k}$. In particular, $au^k=u^ka$. Set $K:=C_D(u^k)=\left\{ d\in D|\ du^k=u^kd\right\}$. It is easy to check that $K$ is a division subring of $D$ containing $a$ and $u$. Let $F_K:=Z(K)$ be the center of $K$ and $D_1:=F_K[a,u]$ the subring of $K$ generated by $a$ and $u$ over $F_K$. Because $\phi(a)=uau^{-1}=a^i\ne a$,  $ua=a^iu$, and thus the ring $D_1$ is non-commutative. Moreover,  every element $y$ of $D_1$ can be written as the form $y={\alpha _1}{a^{{s_1}}}{u^{{t_1}}} + {\alpha _2}{a^{{s_2}}}{u^{{t_2}}} +  \cdots  + {\alpha _m}{a^{{s_m}}}{u^{{t_m}}}$ where $m, s_i,t_i$ is positive integers, and $\alpha_i\in F_K$. Notice that $a^n=1$ and $u^k\in F_K$, so $D_1$ is a finite dimensional space over $F_K$. It follows from Lemma~\ref{2.4} that $D_1$ is a division ring. Obviously, the center $F_1:=Z(D_1)$ of $D_1$ contains $F_K$. Thus $\dim_{F_1}D_1 \le \dim _{F_K}D_1<\infty$ and $D_1$ is centrally finite. 
\end{proof}

\begin{Prop}\label{Korosh ne 2}
Let $D$ be a  division ring whose center $F$  is a field of characteristic $p\ne 2$. Then every non-central element of $D$ satisfying $a^2\in F$ is a Kurosh element. 
\end{Prop}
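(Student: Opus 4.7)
The proof should follow the same template as Proposition \ref{Kurosh torsion}, with $n=2$ playing the role of the torsion order and $-a$ playing the role of $\phi(a)$. Here is the outline.

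First, since $a \notin F$ but $a^2 \in F$, the minimal polynomial of $a$ over $F$ is $x^2 - a^2$, and because $\mathrm{char}\, F \ne 2$ we have $a \ne -a$, so $F(a)/F$ is a separable Galois extension of degree $2$. Its Galois group contains the unique non-trivial $F$-automorphism $\phi\colon a \mapsto -a$. By Dickson's Theorem \cite[16.8]{Lam-first-course}, $a$ and $\phi(a)=-a$ are conjugate in $D$, so there exists $u\in D^{*}$ with $uau^{-1}=-a$, equivalently $ua=-au$.

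Repeating the argument from the proof of Proposition \ref{Kurosh torsion}, one checks that $\phi(x)=uxu^{-1}$ for every $x\in F(a)$, and applying this twice gives $x=\phi^{2}(x)=u^{2}xu^{-2}$ for all such $x$; in particular $au^{2}=u^{2}a$. Set $K:=C_{D}(u^{2})$, which is a division subring of $D$ containing both $a$ and $u$, let $F_{K}:=Z(K)$ (so $u^{2}\in F_{K}$), and let $D_{1}:=F_{K}[a,u]$ be the subring of $K$ generated by $a$ and $u$ over $F_{K}$.

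The key point is that the relations $a^{2}\in F\subseteq F_{K}$, $u^{2}\in F_{K}$, and $ua=-au$ let one rewrite an arbitrary word in $a$ and $u$ as an $F_{K}$-linear combination of $1,a,u,au$. Hence $\dim_{F_{K}}D_{1}\le 4<\infty$, so by Lemma \ref{2.4}, $D_{1}$ is a division ring, and it is centrally finite since $Z(D_{1})\supseteq F_{K}$. The characteristic hypothesis is used once more to conclude that $D_{1}$ is non-commutative and that $a$ is non-central in $D_{1}$: if $a$ commuted with $u$ in $D_{1}$, then $ua=au=-ua$ would force $2au=0$, hence $au=0$, a contradiction. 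Therefore $a$ is a Kurosh element.

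The argument is almost identical to Proposition \ref{Kurosh torsion}, so there is no real obstacle; the only subtlety is checking that the characteristic $\ne 2$ assumption is genuinely needed (to separate $a$ from $-a$ and to keep $D_{1}$ non-commutative), and that the finite-dimensionality bound uses both $a^{2}\in F_{K}$ and $u^{2}\in F_{K}$ to cut the spanning set down to four monomials.
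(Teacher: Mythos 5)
Your proposal is correct and matches the paper's intended argument exactly: the paper's own proof consists of observing that $F(a)/F$ is a non-trivial Galois extension with $\phi(a)=-a$ and then explicitly deferring to the template of Proposition \ref{Kurosh torsion}, which is precisely what you carry out (Dickson's Theorem to get $u$ with $uau^{-1}=-a$, then $K=C_D(u^2)$, $D_1=F_K[a,u]$ spanned by $1,a,u,au$, and Lemma \ref{2.4}). Your filled-in details --- separability of $x^2-a^2$ via $\mathrm{char}\,F\ne 2$, the four-monomial spanning set, and the non-centrality of $a$ from $ua=-au\ne au$ --- are all accurate and are exactly where the characteristic hypothesis enters.
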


\begin{proof} Since $a^2\in F, a\notin F$ and $p=\textrm{char}(F)\ne 2$, the extension $F(a)/F$ is a non-trivial Galois extension. Hence there exists a non-trivial automorphism $\phi\in\textrm{Gal}(F(a)/F)$ such that $\phi(a)=-a\ne a$. We now proceed as in the proof of Proposition \ref{Kurosh torsion} to decude that $a$ is a Kurosh element.
\end{proof}

If, in addition, the division ring $D$ is algebraic, we have the same result even in the case $\textrm{char}(F)=2$. Generally, we have an analogue result when $\textrm{char}(F)$ is an arbitrary prime. Before stating the result, we need the following lemma which can be easily proved by induction (see  \cite{Jacobson}).

\begin{Lemma}\label{Lemma commutator}
For every $x,y\in D$, set $[x,y]=xy-yx$ and $[x,y,y,\cdots,y]=[[x,y,\cdots,y]y]$. Then for each positive integer $n>0$ we have that 
$$[x,\underbrace{y,\cdots,y}_{\textrm{$n$ factors}}]=\sum_{i=0}^n(-1)^i\binom{n}{i}y^ixy^{n-i}.$$
\end{Lemma}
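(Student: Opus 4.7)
The plan is to prove the identity by induction on $n$. The base case $n=1$ is immediate: by definition $[x,y]=xy-yx$, which matches the right-hand side $\binom{1}{0}xy-\binom{1}{1}yx$.

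For the inductive step, assume the formula holds for $n$, and set $S_n := \sum_{i=0}^{n} (-1)^i \binom{n}{i} y^i x y^{n-i}$, so that $[x,y,\ldots,y]$ with $n$ copies of $y$ equals $S_n$. By the nested-commutator definition, $[x,y,\ldots,y]$ with $n+1$ copies of $y$ equals $[S_n,y]=S_n y - y S_n$. Expanding each product gives
$$S_n y - y S_n \;=\; \sum_{i=0}^{n}(-1)^i\binom{n}{i} y^i x y^{n+1-i} \;-\; \sum_{i=0}^{n}(-1)^i\binom{n}{i} y^{i+1} x y^{n-i}.$$
Reindexing the second sum by $j=i+1$ rewrites it as $\sum_{j=1}^{n+1}(-1)^{j}\binom{n}{j-1} y^{j} x y^{n+1-j}$. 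Merging the two sums, the coefficient of $y^i x y^{n+1-i}$ for $1\le i\le n$ becomes $(-1)^i\bigl[\binom{n}{i}+\binom{n}{i-1}\bigr]$, which by Pascal's rule equals $(-1)^i\binom{n+1}{i}$; the boundary contributions at $i=0$ (from the first sum) and $i=n+1$ (from the reindexed second sum) supply $\binom{n+1}{0}=\binom{n+1}{n+1}=1$ with the correct signs. Reassembling yields exactly $\sum_{i=0}^{n+1}(-1)^i\binom{n+1}{i} y^i x y^{n+1-i}$, completing the induction.

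There is no substantive obstacle here: the entire argument reduces to Pascal's identity together with a careful reindexing. The only place where a slip is possible is in verifying that the boundary terms $i=0$ and $i=n+1$ merge cleanly into the same binomial pattern as the middle terms, and this is precisely where Pascal's rule does the work.
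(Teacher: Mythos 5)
Your proof is correct and takes exactly the route the paper intends: the paper gives no written proof, stating only that the lemma ``can be easily proved by induction (see Jacobson),'' and your argument is precisely that induction, with the inductive step $[S_n,y]=S_ny-yS_n$ handled by reindexing and Pascal's identity. The boundary terms $i=0$ and $i=n+1$ are treated correctly, so nothing is missing.
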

\begin{Th}\label{Kurosh algebraic}
Let $D$ be an algebraic division ring whose center $F$ is a field of positive characteristic $p$. Then every non-central element $a$ of $D$ satifying $a^p\in F$ is a Kurosh element. 
\end{Th}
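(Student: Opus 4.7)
The plan is to produce, from the hypothesis $a^{p}\in F$, a Weyl-type pair $(a,d)$ inside $D$ satisfying $da-ad=1$, and then to show that the subring of $D$ generated by $a$, $d$ together with the subfield $F(d^{p})$ is finite dimensional over $F(d^{p})$. Since this subring is a domain (being contained in $D$), it will automatically be a centrally finite division subring in which $a$ is non-central.

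I would begin by applying Lemma~\ref{Lemma commutator} with $y=a$ and $n=p$. The binomial coefficients $\binom{p}{i}$ vanish modulo $p$ for $0<i<p$, so only the endpoint terms survive and the formula collapses to
\[
[x,\underbrace{a,\dots,a}_{p}]\;=\;xa^{p}+(-1)^{p}a^{p}x.
\]
Since $a^{p}\in F$ is central, this equals $(1+(-1)^{p})a^{p}x$, which is zero for both $p$ odd and $p=2$. Hence the inner derivation $g:D\to D$, $g(x):=[x,a]=xa-ax$, satisfies $g^{p}=0$; moreover $g\neq 0$ since $a\notin F$. Let $m$ denote the nilpotency index of $g$, so $2\le m\le p$.

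Next I would extract the Weyl partner. Choose $b\in D$ with $g^{m-1}(b)\neq 0$, and set $c:=g^{m-2}(b)$, $u:=g(c)=g^{m-1}(b)$. Then $u\neq 0$, $g(u)=g^{m}(b)=0$ (i.e.\ $ua=au$), and $ca-ac=g(c)=u$. Because $u^{-1}$ commutes with $a$, the element $d:=u^{-1}c$ satisfies
\[
g(d)\;=\;da-ad\;=\;u^{-1}ca-u^{-1}ac\;=\;u^{-1}(ca-ac)\;=\;u^{-1}u\;=\;1.
\]
Since $g(d)=1$ is central, the derivation property gives $g(d^{n})=n\,d^{n-1}$ by an easy induction, whence $g(d^{p})=p\,d^{p-1}=0$; that is, $d^{p}$ commutes with $a$. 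Because $D$ is algebraic, $d^{p}$ is algebraic over $F$, so $K:=F(d^{p})$ is a subfield of $D$ that commutes with both $a$ and $d$.

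Finally, I would verify that the subring $R\subseteq D$ generated by $K$, $a$, and $d$ is a centrally finite division subring in which $a$ is non-central. Using $da=ad+1$, $a^{p}\in F\subseteq K$, and $d^{p}\in K$, a standard normal-form reduction shows that $R$ is spanned as a $K$-module by the $p^{2}$ monomials $\{a^{i}d^{j}:0\le i,j\le p-1\}$, so $\dim_{K}R\le p^{2}<\infty$. As $R$ is a subring of $D$ it has no zero divisors, and a finite dimensional domain over a field is a division ring, so $R$ is a division ring. Since $K\subseteq Z(R)$, the chain $\dim_{Z(R)}R\le\dim_{K}R\le p^{2}$ makes $R$ centrally finite. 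Finally $a,d\in R$ with $ad-da=-1\neq 0$ force $a\notin Z(R)$; so $a$ is a non-central element of a centrally finite division subring of $D$, i.e.\ $a$ is Kurosh. The main technical point is the normal-form reduction bounding $\dim_{K}R$ by $p^{2}$---one must check that the three relations $da=ad+1$, $a^{p}\in F$, $d^{p}\in K$ really close the candidate spanning set under multiplication; everything upstream is essentially derivation arithmetic once Lemma~\ref{Lemma commutator} is invoked.
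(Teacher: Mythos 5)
Your argument is correct, and its first half is the paper's own: both invoke Lemma~\ref{Lemma commutator} with $n=p$ to conclude that the inner derivation $g(x)=[x,a]$ satisfies $g^p=0$, and both use the minimal index of nilpotency to produce a partner element whose commutator with $a$ is tame. The paper normalizes differently, setting $b=y[y,a]^{-1}a$ to get $ba-ab=a$ (your $d$ with $da-ad=1$ corresponds to $b=ad$), then takes the subring $F[a,b]$ and uses the algebraicity of $D$ --- specifically that $b$ is algebraic over $F$, so its powers reduce --- to get $\dim_F F[a,b]<\infty$ and apply Lemma~\ref{2.4}. Where you genuinely diverge is this finiteness step: instead of bounding powers of the partner by algebraicity, you enlarge the base field to $K=F(d^p)$, using the derivation identity $g(d^p)=pd^{p-1}=0$ to see that $d^p$ centralizes $a$, and then the characteristic-$p$ Weyl normal form gives $\dim_K R\le p^2$ outright. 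This buys a real strengthening: algebraicity of $D$ enters your proof only to assert that $d^p$ is algebraic over $F$, and that assertion is never actually used --- $F(d^p)$ is a subfield of $D$ in any case, being generated over the center by a single element commuting with both $a$ and $d$, and your dimension count is over $K$, not over $F$. So your argument in fact proves that in \emph{any} division ring of characteristic $p$, every non-central $a$ with $a^p\in F$ is Kurosh, with no algebraicity hypothesis, whereas the paper's route needs algebraicity to control the powers of $b$. Your write-up is also more explicit than the paper's at the one delicate point, the normal-ordering closure of the spanning set $\{a^id^j: 0\le i,j\le p-1\}$, which the paper passes over with a bare assertion that exponents can be bounded.
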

\begin{proof}
Let $x$ be an element of $D$ such that $xa\ne ax$. We have from Lemma \ref{Lemma commutator} that $$[x,\underbrace{a,\cdots,a}_{\textrm{$p$ factors}}]=\sum_{i=0}^p(-1)^p\binom{p}{i}a^ixa^{n-i}.$$
Since $\textrm{char}(F)=p$, $a^p\in F$. Hence, $[x,\underbrace{a,\cdots,a}_{\textrm{$p$ factors}}]=a^px-xa^p=0$. Call $r\leq p$ the minimal integer such that $[x,\underbrace{a,\cdots,a}_{\textrm{$r$ factors}}]=0$. Because $xa\ne ax$, $r\ge 2$. Set $y:=[x,\underbrace{a,\cdots,a}_{\textrm{$r-2$ factors}}]$. Then $ya-ay=[y,a]\ne 0$ by the minimality of $r$, and $[[y,a],a]=0$, which implies that $ya-ay\in C_D(a)$, so that $ba-ab=a$ with $b=y[y,a]^{-1}a\in D$. Let $K:=F[a,b]$ be the subring of $D$ generated by $F$ and $a,b$. It suffices to show that $K$ is a centrally finite division ring and $a$ does not belong to the center of $K$. Indeed, it is clearly that $a$ is not in the center of $K$ as $ab\ne ba$. Now because $ba= ab+a$, every element in $K$ can be written as the form $\sum_{i,j} \alpha_{i,j}a^ib^j$ where $\alpha_{i,j}\in F$. By the assumption that $D$ is algebraic, there exists $m>0$ such that $\sum_{i,j} \alpha_{i,j}a^ib^j=\sum_{i=1,j=1}^m \alpha_{i,j}a^ib^j$. Therefore, $K$ is a finite dimensional vector space over $F$ whose minimal generating set is a subset of $\{\, a^ib^j\mid 1\le i,j\le m\,\}$. Hence $K$ is a centrally finite division ring by Lemma~\ref{2.4}.
\end{proof}


A subfield $K$ of a division ring $D$ is called {\it maximal} if there is no subfield of $D$ strictly containing $K$. Such a maximal subfield $K$ always exists by  Zorn's Lemma. By \cite[15.8]{Lam-first-course}, $D$ is a centrally finite division ring if and only if $\dim_FK<\infty$. However, $D$ is not neccessary algebraic over $F$ even though $K$ is an algebraic extention of $F$ . In Section 4, we give an example of a non-algebraic division ring $D$ whose maximal field $K$ is algebraic over $F$. The following theorem describes a class of Kurosh elements in such a division ring.
\begin{Th} Let $D$ be a division ring  and $K$ a maximal field of $D$ which is algebraic over $F$. If $K=F(S)$ for some subset $S$ of $K$ then every element $a$ in $D\setminus K$ with $S_a:=\{x\in S|\ ax\ne ax\}$  finite is a Kurosh element.
\end{Th}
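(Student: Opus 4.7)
The plan is to exhibit an explicit centrally finite division subring of $D$ containing $a$ as a non-central element. Write $S_a = \{x_1,\dots,x_m\}$ and set $L := F(S \setminus S_a)$, the subfield of $K$ generated over $F$ by those elements of $S$ that commute with $a$. By construction $a$ centralizes every element of $L$, and since $K = F(S)$ we have $K = L(x_1,\dots,x_m)$. Because $K/F$ is algebraic, so is $K/L$; being finitely generated as well, $[K:L] < \infty$.

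Next I would pass to the centralizer $R := C_D(L) = \{d\in D \mid dx = xd \text{ for all } x\in L\}$. This is a division subring of $D$ (centralizers of subsets are always division subrings), it contains $a$ by the choice of $L$, and it contains the commutative subring $K$ since $L \subseteq K$ and $K$ is a field. Moreover $L \subseteq Z(R)$, so $[K : Z(R)] \le [K : L] < \infty$. Because $K$ is a maximal subfield of $D$ and $K \subseteq R \subseteq D$, no subfield of $R$ strictly contains $K$, so $K$ is a maximal subfield of $R$. By \cite[15.8]{Lam-first-course}, $R$ is therefore centrally finite.

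It remains to note that $a$ is non-central in $R$: each $x_i \in S_a$ lies in $K \subseteq R$ but satisfies $ax_i \neq x_i a$, so $a \notin Z(R)$. Consequently $R$ is a centrally finite division subring of $D$ containing $a$ as a non-central element, which is exactly the definition of $a$ being Kurosh.

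The step I expect to require the most care is the reduction $[K:L] < \infty$, since it uses both hypotheses crucially: algebraicity of $K/F$ is needed so that each $x_i$ is algebraic over $L$, and finiteness of $S_a$ is needed to keep $K$ finitely generated over $L$; without either hypothesis the centralizer $R$ could easily fail to have a finite-dimensional maximal subfield over its center. Everything else flows from standard facts about centralizers in division rings together with the maximal-subfield criterion for central finiteness.
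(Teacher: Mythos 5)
Your proof is correct, and at its core it is the same argument as the paper's; the only real difference is the choice of ambient subring. The paper works inside $D_1:=K(a)$, the division subring generated by $K$ and $a$, whereas you work inside the (generally larger) centralizer $R:=C_D(L)$ with $L=F(S\setminus S_a)$; note $K(a)\subseteq C_D(L)$, since both $K$ and $a$ centralize $L$. Everything that carries the weight is shared between the two proofs: the identification of $L$, the step $[K:L]<\infty$ (algebraic plus finitely generated), the observation that $K$ remains a maximal subfield of the chosen subring with $L$ inside its center, and the appeal to \cite[15.8]{Lam-first-course}. Two small points you elide, both easily repaired: first, the inequality $[K:Z(R)]\le [K:L]$ tacitly uses the tower $L\subseteq Z(R)\subseteq K$, where $Z(R)\subseteq K$ follows from the maximality of $K$ in $R$ (equivalently $C_R(K)=K$, by \cite[15.7]{Lam-first-course}); second, your non-centrality argument via the $x_i$ silently assumes $S_a\ne\emptyset$, which is not a hypothesis but does follow from the hypotheses, since $S_a=\emptyset$ would give $a\in C_D(K)=K$, contradicting $a\in D\setminus K$. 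The paper avoids this last issue by deducing $a\notin Z(D_1)$ directly from $a\notin K$ together with $Z(D_1)\subseteq C_{D_1}(K)=K$. Your variant buys marginally less verification (a centralizer is automatically a division subring, and one need not discuss the structure of $K(a)$), while the paper's choice gives a smaller and more explicit centrally finite subring witnessing that $a$ is Kurosh.
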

\begin{proof} Let $D_1:=K(a)$ be the division subring of $D$ generated by $K$ and $a$. It suffices to prove that $D_1$ is centrally finite and $a$ is not in the center $F_1=Z(D_1)$. Since $K$ is a maximal field of $D$, it is also a maximal field of $D_1$. Since $a\notin K$, it follows that $a\notin F_1$. Obviously, each $x\in F(S\setminus S_a)$ commutes with $a$, and hence belongs to $F_1$. It means that $F_1$ contains $F(S\backslash S_a)$. Because $K$ is algebraic over $F$, $K$ is also algebraic over $F(S\backslash S_a)\supseteq F$. Moreover, by the fact that $S_a$ is finite, $K=F(S\backslash S_a)(S_a)$ is a finitely generated field extention over $F(S\backslash S_a)$, and thus $\dim_{F(S\backslash S_a)}K<\infty$. Therefore, $\dim _{F_1} K\le \dim_{F(S\backslash S_a)}K<\infty$. By {\cite[15.8]{Lam-first-course}}, the ring $D_1$ is centrally finite.  
\end{proof}

\section{The main results}

We first have the following useful lemma, which is a partial result of the Conjecture~\ref{Herstein}. 
\begin{Lemma}\label{radical in center}
Let $D$ be a centrally finite division ring and $N$ a normal subgroup of $D$. If  $N$ is radical over $F$, then $N\subseteq F$.
\end{Lemma}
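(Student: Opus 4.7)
The plan is to argue by contradiction: assume there exists $a\in N\setminus F$, so that $F(a)$ is a proper finite extension of $F$ (since $D$ is centrally finite) and $a^m\in F$ for some $m\ge1$. Note also that $F$ is infinite by Wedderburn's little theorem, since $D$ is centrally finite and non-commutative.

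My first move is to apply the Cartan-Brauer-Hua theorem to the division subring $R$ of $D$ generated by $F$ together with all $D^*$-conjugates of $a$. Because the set of conjugates is invariant under every inner automorphism of $D$, so is $R$; since $a\in R\setminus F$, Cartan-Brauer-Hua forces $R=D$. By normality of $N$, every conjugate $uau^{-1}$ already lies in $N$, so $N$ contains a set which generates $D$ as an $F$-algebra (and hence, by centrally-finiteness, as a division ring with no further inverses needed).

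My second move, in the spirit of Proposition~\ref{Kurosh torsion}, is to realize a nontrivial $F$-automorphism $\sigma$ of $F(a)$ by conjugation inside $D$: using Dickson's theorem, which is available because $D$ is centrally finite, I get $u\in D^*$ with $uau^{-1}=\sigma(a)\ne a$ (assuming $F(a)/F$ separable; the purely inseparable case is handled after passing to a suitable separable subextension, as in Theorem~\ref{Kurosh algebraic}). Then $\sigma(a)\in N$, and the element $c:=\sigma(a)a^{-1}\in N$ lies in the commutative field $F(a)$ whenever $F(a)/F$ is normal; moreover every $D^*$-conjugate of $a$ satisfies the \emph{same} equation $x^m=a^m\in F$, so the normal closure of $a$ inside $D^*$ is confined to the proper ``locus'' $V_m:=\{x\in D^*:x^m=a^m\}\subseteq D$.

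The main obstacle is to bridge the ring-theoretic output of Cartan-Brauer-Hua ($R=D$) with the group-theoretic/Zariski constraint that the normal closure of $a$ lies in the proper set $V_m$. I would attempt this by a dimension/density argument over the infinite field $F$: if conjugates of $a$ generate $D$ as an $F$-algebra while being uniformly of exponent $m$ modulo $F$, a polynomial-identity argument together with Kaplansky's commutativity theorem (or Faith's theorem~\cite{Faith}) should force $D$ to be commutative, contradicting our standing hypothesis. The delicate point is precisely passing from ring generation to a genuine statement about the multiplicative group; if this step proves too technical along the route above, the cleanest alternative is to cite the classical fact---essentially this lemma viewed as a partial case of Conjecture~\ref{Herstein}---that normal subgroups of $D^*$ with torsion image in $D^*/F^*$ must be central when $D$ is centrally finite, which has an independent, more direct proof in the literature.
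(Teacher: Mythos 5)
Your proposal has a genuine gap at exactly the point you flag as ``delicate,'' and the gap is not merely technical: the bridge cannot be built from the constraints you have extracted. The only consequences of radicality you actually use are (i) $a^m\in F$ for the single element $a$, hence the conjugacy class of $a$ lies in $V_m=\{x\in D^*: x^m=a^m\}$, and (ii) that this class together with $F$ generates $D$ (via Cartan--Brauer--Hua). But (i) and (ii) are simultaneously satisfiable in a non-commutative centrally finite division ring: in the real quaternions $\mathbb{H}$, the conjugates of $i$ all satisfy $x^2=-1\in\mathbb{R}$, yet they generate $\mathbb{H}$ as an $\mathbb{R}$-algebra. So no density/polynomial-identity argument can derive a contradiction from (i) and (ii) alone; note also that a centrally finite division ring always satisfies a polynomial identity, so ``PI plus Kaplansky'' has no traction, and Kaplansky's and Faith's theorems require all of $D$ (or $D^*$) to be radical over the subring, a hypothesis you neither have nor can derive, since products of radical elements need not be radical. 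What saves the lemma in the quaternion example is precisely the part of the hypothesis you never use: the normal subgroup generated by $i$ contains products of two conjugates such as $i(\cos\theta\, i+\sin\theta\, j)=\cos(\pi-\theta)+\sin(\pi-\theta)k$, and for suitable $\theta$ no power of this element is real, so that subgroup is not radical over $\mathbb{R}$. Any correct proof must therefore exploit radicality of elements of $N$ other than $a$ itself. Your fallback option is circular: ``normal subgroups of $D^*$ with torsion image in $D^*/F^*$ are central when $D$ is centrally finite'' is a verbatim restatement of Lemma~\ref{radical in center}, since radicality over $F$ means exactly that the image of $N$ in $D^*/F^*$ is torsion. (A secondary problem: your Dickson step needs a nontrivial $F$-automorphism of $F(a)$, but $F(a)/F$ may have trivial automorphism group here --- the other roots of $x^m-a^m$ differ from $a$ by $m$-th roots of unity that need not lie in $F(a)$ --- and unlike in Proposition~\ref{Kurosh torsion} there is no reason for normality of the extension.)

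The paper's actual proof uses normality of $N$ in the essential way your sketch misses: for $a\in N$ and \emph{arbitrary} $x\in D^*$, the multiplicative commutator $u=xax^{-1}a^{-1}$ lies in $N$, hence $u^n\in F$ for some $n$. By Herstein's Sublemma in \cite{Herstein1} (a reduced-norm argument available because $D$ is centrally finite), $u^n$ is then a root of unity, so $u$ is torsion; by Theorem~9 of \cite{Herstein1}, such a torsion commutator is central. Thus $xax^{-1}a^{-1}\in F$ for all $x\in D^*$, which gives $xF(a)x^{-1}\subseteq F(a)$, and Cartan--Brauer--Hua is applied not to the ring generated by the conjugacy class (your move, which only yields the unusable conclusion $R=D$) but to the commutative, inner-invariant subring $F(a)$, forcing $F(a)=F$ and $a\in F$. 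In short: the correct use of CBH is on $F(a)$ \emph{after} establishing its invariance from radicality of commutators in $N$, and the two Herstein results are the missing engine that converts ``$u^n\in F$'' into ``$u\in F$.''
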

\begin{proof}
Let $a\in N$ and $x\in D^*$. Then $u:=xax^{-1}a^{-1}\in N$ as $N\trianglelefteq D^*$. Since $N$ is radical over $F$, there exists $n(x,a)>0$ such that $u^{n(x,a)}\in F$. Since $D$ is finite dimensional over $F$, it follows from  {\cite[Sublemma]{Herstein1}} that $u^{n(x,a)}$ is a root of unity. Thus $u^t=1$ for some $t>0$. From {\cite[Theorem 9]{Herstein1}}, it follows that $u\in F$, i.e., $xax^{-1}a^{-1}\in F$. So $xF(a)x^{-1}\subseteq F(a)$ for all $x\in D$. By the Brauer-Cartan-Hua Theorem ({\cite[13.17]{Lam-first-course}}) and $F(a)$ is commutative, $F(a)=F$. Hence $a\in F$.
\end{proof}

\begin{Lemma}\label{xN and y commute}
Let $D$ be a centrally finite division ring  and $N$  a normal subgroup of $D$. If, for any $x,y\in N$, $x$ is radical over $C_D(y)$ then $N\subseteq F$.
\end{Lemma}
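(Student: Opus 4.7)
My plan is to show that $N$ is radical over $F$ and then conclude via Lemma~\ref{radical in center}. I would induct on $[D:F]$; the base case $[D:F]=1$ is trivial, so assume the statement for all centrally finite division rings of strictly smaller dimension over their centers. Pick $a\in N$ with $a\notin F$ (otherwise nothing to prove) and set $L:=F(a)$, $K:=C_D(a)$. By the double centralizer theorem, $K$ is a centrally finite division subring of $D$ with $Z(K)=L$ and $[K:L]=[D:F]/[L:F]^2<[D:F]$ because $[L:F]>1$. The intersection $N\cap K$ is normal in $K^*$, and the hypothesis transfers: for $x,y\in N\cap K$, some power of $x$ lies in $C_D(y)$, and being in $K$ it already lies in $C_D(y)\cap K=C_K(y)$. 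Induction applied to $K$ gives $N\cap K\subseteq Z(K)=F(a)$.

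Next, I would feed every conjugate of $a$ through the hypothesis. For each $g\in D^*$, the element $gag^{-1}$ lies in $N$, so $(gag^{-1})^{n_g}=ga^{n_g}g^{-1}\in C_D(a)$ for some $n_g>0$, and by the previous step $ga^{n_g}g^{-1}\in N\cap K\subseteq F(a)$. Write $c_g:=ga^{n_g}g^{-1}\in N\cap F(a)$. If $c_g\in F$ for some $g$, then $a^{n_g}=g^{-1}c_gg\in F$ and $a$ is radical over $F$, which is what I want. Otherwise $c_g\in N\setminus F$ for every $g$, and the same inductive conclusion applied to $c_g$ gives $N\cap C_D(c_g)\subseteq F(c_g)$. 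Since $a$ and $c_g$ both lie in the commutative field $F(a)$, we have $a\in N\cap C_D(c_g)\subseteq F(c_g)$, so $F(a)\subseteq F(c_g)$; combined with $c_g\in F(a)$ this forces $F(c_g)=F(a)$. Writing $F(c_g)=gF(a^{n_g})g^{-1}$ and comparing $F$-degrees inside the chain $F(a^{n_g})\subseteq F(a)$ yields $F(a^{n_g})=F(a)$, hence $gF(a)g^{-1}=F(a)$ for every $g\in D^*$.

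This latter possibility is ruled out by the Cartan-Brauer-Hua theorem~\cite[13.17]{Lam-first-course}: a $D^*$-invariant division subring must be contained in $F$ or equal $D$, and both contradict $a\notin F$ and the fact that $F(a)$ is commutative while $D$ is not. Hence the first alternative must hold for some $g$: a power of $a$ lies in $F$. Since $a\in N$ was arbitrary, $N$ is radical over $F$, and Lemma~\ref{radical in center} finishes the proof.

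The main obstacle is bridging the intermediate conclusion $N\cap C_D(a)\subseteq F(a)$ (which by itself only pushes powers of conjugates of $a$ into $F(a)$, not into $F$) to the actual radicality of $N$ over $F$. The key move is to apply the inductive conclusion a second time to the auxiliary element $c_g\in F(a)\cap N$ in order to force $F(a)=F(c_g)$, after which a degree count produces $D^*$-invariance of $F(a)$ and Cartan-Brauer-Hua eliminates the pathological branch.
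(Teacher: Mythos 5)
Your proof is correct, and it takes a genuinely different route from the paper's. The paper needs no induction and no double centralizer theorem: given $x,y\in N$ it passes to the division subring $K:=F(x,y)$ generated by the two elements, notes that the hypothesis gives each $a\in N\cap K$ a single power $a^{n(a,x)n(a,y)}$ commuting with both generators, hence lying in $K\cap C_D(K)=Z(K)$, so that $N\cap K$ is radical over $Z(K)$; Lemma~\ref{radical in center} applied to the centrally finite ring $K$ then gives $N\cap K\subseteq Z(K)$, so $x$ and $y$ commute, and once $N$ is abelian the Herstein--Scott theorem \cite{Herstein-Scott63}, applied to the $N$-invariant division subring generated by $N$ over $F$, forces $N\subseteq F$. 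You instead induct on $[D:F]$ via the double centralizer theorem ($Z(C_D(a))=F(a)$ and $[C_D(a):F(a)]=[D:F]/[F(a):F]^2<[D:F]$ for $a\notin F$, and the hypothesis visibly restricts to $C_K(y)=C_D(y)\cap K$, so it passes down), prove the stronger intermediate fact that every element of $N$ is radical over $F$, and then apply Lemma~\ref{radical in center} just once, to $D$ itself. Your Case 2 mechanism checks out: with $c_g=ga^{n_g}g^{-1}\in N\cap C_D(a)\subseteq F(a)$, the second application of the induction to $c_g\notin F$ gives $a\in N\cap C_D(c_g)\subseteq F(c_g)$, hence $F(c_g)=F(a)$; since conjugation by $g$ fixes $F$ pointwise and preserves $F$-dimension, and $F(a^{n_g})\subseteq F(a)$, the degree count indeed forces $F(a^{n_g})=F(a)$ and $gF(a)g^{-1}=F(a)$ for all $g\in D^*$, which Cartan--Brauer--Hua \cite[13.17]{Lam-first-course} rules out because $F(a)$ is commutative, $D$ is not, and $a\notin F$. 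In exchange for being longer, your argument replaces the paper's two-generator reduction and the Herstein--Scott input by the double centralizer theorem plus Cartan--Brauer--Hua, and it produces the explicit intermediate conclusion that $N$ is radical over the center, which the paper only obtains a posteriori from $N\subseteq F$.
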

\begin{proof} First of all, we prove that $N$ is commutative. Let $x,y\in N$ and $K:=F(x,y)$. Since $N$ is normal in $D^*$, $N\cap K$ is normal in $K^*$. Let $a\in N\cap K$. By hypothesis, there are $n(a,x)$ and $n(a,y)$ such that 
$$a^{n(a,x)}x=xa^{n(a,x)}$$
$$a^{n(a,y)}y=ya^{n(a,y)}.$$
Put $n=n(a,x)n(a,y)$. Then $a^n=a^{n(a,x)n(a,y)}=(xa^{n(a,x)}x^{-1})^{n(a,y)}=xa^nx^{-1}.$ Similarly, $a^n=a^{n(a,y)n(a,x)}=(ya^{n(a,y)}y^{-1})^{n(a,x)}=ya^ny^{-1}.$ These imply that $a^n$ is in the center $Z(K)$ of $K$. So $N\cap K$ is radical over $Z(K)$. By Lemma \ref{radical in center}, $N\cap K$ is contained in $Z(K)$. In particular, $x$ and $y$ commute. Thus $N$ is commutative.

Let $H$ be a division subring of $D$ generated by $N$ over $F$, then $aHa^{-1}\leq H$ for all $a\in N$. If $N\not\subseteq F$ then by \cite{Herstein-Scott63}, either $H\subseteq F$ or $H=D$. Since $N$ is commutative, $H$ is also commutative. Therefore $H\subseteq F$. Hence $N\subseteq F$ as desired.
\end{proof}

Now we can prove the main result stated in the introduction which gives an affirmative answer to the Conjecture~\ref{Bien conjecture} for centrally finite division rings.
\begin{Lemma}\label{generalized Herstein}
Let $D$ be a centrally finite division ring, $K$  a proper division subring of $D$ and $N$ a normal subgroup of $D$. If $N$ is radical over $K$ then $N\subseteq F$.
\end{Lemma}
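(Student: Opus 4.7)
My plan is to verify the hypothesis of Lemma~\ref{xN and y commute}: for any $x,y\in N$, that some power of $x$ commutes with $y$; then Lemma~\ref{xN and y commute} gives $N\subseteq F$.

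First I would reduce to the case $F\subseteq K$. If $F\not\subseteq K$, I replace $K$ by the subring $KF$ generated by $K$ and $F$; since $F$ is central, this is a finite-dimensional $F$-subalgebra of $D$, hence a centrally finite division subring by Lemma~\ref{2.4}. A dimension argument via the double-centralizer theorem for central simple $F$-algebras shows $KF$ remains a proper subring of $D$, and $N$ is of course still radical over $KF$. So we may assume $F\subseteq K\subsetneq D$.

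Next, I would induct on $\dim_F D$. Fix $x,y\in N$ and let $K':=F(x,y)$; by Lemma~\ref{2.4} this is a centrally finite division subring. The subgroup $N\cap K'$ is normal in $(K')^*$ and radical over $K\cap K'$, which is itself a division subring of $K'$. In the ``generic'' case $K\cap K'\subsetneq K'$ and $K'\subsetneq D$, the inductive hypothesis applied to $K'$ (with proper division subring $K\cap K'$ and normal subgroup $N\cap K'$) gives $N\cap K'\subseteq Z(K')$, whence $x$ and $y$ commute. The sub-case $K'\subseteq K$ (i.e.\ $x,y\in K$) is handled by applying the inductive hypothesis inside the strictly smaller centrally finite ring $K$.

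The main obstacle will be the boundary case $K'=F(x,y)=D$, where the induction does not reduce the ambient dimension. Here $x,y$ generate $D$ together with $F$, yet one still has $x^{n_x},y^{n_y}\in K$. My plan for this case is to apply Skolem--Noether (or Dickson's theorem) inside the centrally finite $K$ to produce $k\in K^*$ with $ky^{n_y}k^{-1}=xy^{n_y}x^{-1}$, placing $k^{-1}x\in C_D(y^{n_y})$; a further step then promotes this to a power of $x$ commuting with $y$ itself. Alternatively, exploiting that the commutator $[x,y]$ has reduced norm $1$, one may try to generalize the Sublemma of Herstein used in Lemma~\ref{radical in center} to the setting where powers land in a proper division subring rather than in $F$. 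I expect this boundary case to be the most delicate part of the argument.
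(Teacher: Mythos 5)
Your overall frame (verify the hypothesis of Lemma~\ref{xN and y commute}) matches the paper's, but the proof does not close: everything is funnelled into the boundary case $F(x,y)=D$, and that case --- which you explicitly leave unresolved --- is the entire content of the lemma. In a centrally finite division ring two non-commuting elements typically generate $D$ over $F$ (e.g.\ $i,j$ in the real quaternions $\mathbb{H}$), so your induction on $\dim_F D$ never reduces the hard instances. The tools you sketch for this case do not work as stated: to apply Skolem--Noether (or Dickson) inside $K$ you would need both $y^{n_y}$ and $xy^{n_y}x^{-1}$ to lie in $K$ and to generate isomorphic simple $Z(K)$-subalgebras, but there is no reason $xy^{n_y}x^{-1}\in K$ at all; and even granted some $k\in K^*$ with $k^{-1}x\in C_D(y^{n_y})$, nothing ``promotes'' this to a power of $x$ commuting with $y$. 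The missing idea is Herstein's identity trick, which is how the paper proves the lemma \emph{without any induction}: given $a,b\in N\setminus K$ with $a^nb\ne ba^n$ for all $n$, put $x=(a+b)a(a+b)^{-1}$ and $y=(b+1)a(b+1)^{-1}$, both in $N$ by normality, so $x^m,y^m\in K$ for a common $m$; then $x^m(a+b)-y^m(b+1)=a^m(a-1)$ yields $(x^m-y^m)b=a^m(a-1)+y^m-x^ma$, and one is forced into $x^m=y^m$, whence $a^m(a-1)=y^m(a-1)$, $a^m=y^m=(b+1)a^m(b+1)^{-1}$, i.e.\ $a^mb=ba^m$, a contradiction. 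The remaining cases are then dispatched by an auxiliary element: if $b\in K$, pick $x\in N\setminus K$, note $xb\notin K$, and combine $a^r(xb)=(xb)a^r$ with $a^sx=xa^s$ to get $a^{rs}b=ba^{rs}$.

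Two secondary steps of your plan are also broken. First, the reduction to $F\subseteq K$ is false: $KF$ need \emph{not} remain proper. Take $D=\mathbb{H}$ with $F=\mathbb{R}$ and $K$ the rational quaternions $\mathbb{Q}\oplus\mathbb{Q}i\oplus\mathbb{Q}j\oplus\mathbb{Q}k$, a proper division subring; then $KF=D$, so no double-centralizer argument can save this. (Fortunately the lemma, and the paper's proof, require no containment $F\subseteq K$ --- only Theorem~\ref{main theorem} assumes it --- so this reduction is as unnecessary as it is invalid.) Second, the sub-case $x,y\in K$ cannot be ``handled by the inductive hypothesis inside $K$'': to invoke the lemma for $K$ you need a \emph{proper} division subring of $K$ over which $N\cap K$ is radical, and you have none --- $N\cap K$ is only known to be radical over $K$ itself, which is vacuous. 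The paper's auxiliary-element trick above is precisely what covers elements of $N$ lying in $K$.
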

\begin{proof}
Suppose that $N\not\subseteq F$. If $N\setminus K=\emptyset$, then $N\subseteq K$. By \cite{Herstein-Scott63}, either $K\subseteq F$ or $K=D$. Since $K\ne D$ by the hypothesis, it follows that $K\subseteq F$. Hence $N\subseteq F$, a contradiction. Thus, we may assume that $N\setminus K\ne\emptyset$. In order to prove the lemma, it suffices to show that all elements in $N$ satisfy the conditions of Lemma \ref{xN and y commute}. Let $a,b\in N$.  Assume first that $a\notin K$. 

If $b\notin K$, assume by contradiction that $a$ is not radical over $C_D(b)$, i.e., $a^nb\ne ba^n$ for all $n>0$. Then $a+b\ne 0, a\ne \pm 1$ and $b\ne \pm 1$. Thus $x=(a+b)a(a+b)^{-1}, y=(b+1)a(b+1)^{-1}\in N$
as $N$ is normal in $D^*$. Since $N$ is radical over $K$, there exist $m_x>0$ and $m_y>0$ such that $x^{m_x}\in K$ and $y^{m_y}\in K$. Then $x^m\in K$ and $y^m\in K$ where $m=m_xm_y$. Now
$$x^mb-y^mb+x^ma-y^m=x^m(a+b)-y^m(b+1)=(a+b)a^m-(b+1)a^m=a^m(a-1).$$ This implies $$(x^m-y^m)b=a^m(a-1)+y^m-x^ma.$$ If $(x^m-y^m)\ne 0$, then $b=(x^m-y^m)^{-1}[a^m(a-1)+y^m-x^ma]\in K$, and this contradicts to the choice of $b$. Therefore $x^m=y^m$, and thus $a^m(a-1)=y^m(a-1)$. Since $a\ne 1, a^m=y^m=(b+1)a^m(b+1)^{-1}$ and it follows that $a^mb=ba^m$, which is a contradiction.

 If $b\in K$, consider an element $x\in N\setminus K$. Since $xb\notin K$, by the previous case, there exist positive integers $r,s$ such that $a^rxb=xba^r\quad\textrm{and}\quad a^sx=xa^s.$ These imply that
$$a^{rs}=(xb)^{-1}a^{rs}(xb)=b^{-1}(x^{-1}a^{rs}x)b=b^{-1}(x^{-1}a^{s}x)^rb=b^{-1}a^{rs}b$$
and so $a^{rs}b=ba^{rs}$.

 Assume now that $a\notin K$. Since $N$ is radical over $K$ then $a^m\in K$ for some $m>0$. By the above argument, there exsits $n>0$ such that $a^{mn}b=ba^{mn}$.

As a conclusion, in any case, any two elements $a,b\in N$ satisfy the conditions of Lemma \ref{xN and y commute}. Therefore $N\subseteq F$.
\end{proof}

\begin{Th}\label{main theorem}
Let $D$ be a division ring, $K$ a proper division subring of $D$ containing $F$ and $N$ a normal subgroup of $D$. If $N$ is radical over $K$ then $N\setminus K$ does not contain any Kurosh element.
\end{Th}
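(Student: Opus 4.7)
The plan is to argue by contradiction and reduce the statement to the centrally finite case already handled in Lemma~\ref{generalized Herstein}. Suppose, towards a contradiction, that there is a Kurosh element $a \in N \setminus K$. By the definition of a Kurosh element, one can choose a centrally finite division subring $D_1$ of $D$ with $a \in D_1$ and $a \notin Z(D_1)$. The strategy is to restrict the data in the theorem to $D_1$ and then invoke Lemma~\ref{generalized Herstein} inside $D_1$.

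Concretely, set
$$N_1 := N \cap D_1^*, \qquad K_1 := K \cap D_1.$$
A routine check shows $N_1 \trianglelefteq D_1^*$, since conjugation by any $x \in D_1^*$ preserves both $N$ (by normality of $N$ in $D^*$) and $D_1^*$. Likewise $K_1$ is a division subring of $D_1$: it is closed under the ring operations, and if $y \in K_1$ is nonzero then $y^{-1}$ lies both in $K$ and in $D_1$, hence in $K_1$. Moreover $a$ witnesses $K_1 \subsetneq D_1$, because $a \in D_1$ while $a \notin K$. Finally, radicality descends for free: for any $y \in N_1 \subseteq N$ there is $n_y > 0$ with $y^{n_y} \in K$, and since $y \in D_1$ we have $y^{n_y} \in D_1 \cap K = K_1$.

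These checks put us exactly in the setting of Lemma~\ref{generalized Herstein}: $D_1$ is a centrally finite division ring, $K_1$ is a proper division subring of $D_1$, and $N_1$ is a normal subgroup of $D_1^*$ that is radical over $K_1$. The lemma then forces $N_1 \subseteq Z(D_1)$. But $a \in N \cap D_1^* = N_1$ while $a \notin Z(D_1)$ by our choice of $D_1$, which is the desired contradiction.

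I do not anticipate a genuine obstacle. The notion of a Kurosh element was introduced precisely to supply a centrally finite subring in which the offending element remains non-central, so once that subring is extracted, the proof reduces to a short diagram-chase through the intersections $N \cap D_1^*$ and $K \cap D_1$ followed by a direct appeal to Lemma~\ref{generalized Herstein}. The only slightly delicate point to flag is that Lemma~\ref{generalized Herstein} is applied to $K_1$, which need not contain the center of $D_1$; but that lemma makes no such requirement, so the reduction goes through cleanly.
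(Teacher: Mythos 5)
Your proposal is correct and follows essentially the same route as the paper's own proof: both extract the centrally finite subring $D_1$ (the paper's $D_x$) guaranteed by the Kurosh property, pass to the intersections $N\cap D_1^*$ and $K\cap D_1$, note that $a\notin K$ makes $K\cap D_1$ proper, and invoke Lemma~\ref{generalized Herstein} to force $a$ into $Z(D_1)$, a contradiction. Your flagged point is also well taken: Lemma~\ref{generalized Herstein} does not require the proper division subring to contain the center, so applying it to $K_1$ is legitimate even though $K_1$ need not contain $Z(D_1)$.
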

\begin{proof}
Assume that $N$ contains a Kurosh element $x\notin K$. Then there is a division subring $D_x$ in $D$ such that $D_x$ contains $x$ and $D_x$ is finite dimensional over its center $F_x$. Let $N_x:=N\cap D_x$ be a normal subgroup of $D_x^*$. Since $N$ is radical over $K$, it follows that $N_x$ is radical over $K_x:=K\cap D_x$. Since $x\notin K$, one has $K_x\ne D_x$. Then by Lemma \ref{generalized Herstein}, $N_x\subseteq F_x$. In particular, $x\in F_x$, which contradicts to the definition of $x$.
\end{proof}

Theorem \ref{main theorem} is a generalization of a result recently obtained in \cite{HDB12}. As a consequence, the Conjecture \ref{Bien conjecture} holds for locally division rings:
\begin{Cor}\label{conjectures hold locally finite}
In a locally finite division ring $D$, every normal subgroup $N$ of $D$ radical over a proper division subring of $D$ is contained in $F$. 
\end{Cor}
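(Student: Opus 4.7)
The plan is to derive the corollary directly from the Main Theorem (Theorem~\ref{main theorem}), combined with the observation recorded in the introduction that every non-central element of a locally finite division ring $D$ is a Kurosh element. Indeed, for any non-central $x\in D$ one can pick $y\in D$ with $yx\ne xy$; by local finiteness $F(x,y)$ is finite dimensional over $F$ and hence centrally finite, and since it is non-commutative $x$ is non-central in it. First I would reduce to the case $F\subseteq K$: if the given proper division subring $K$ does not contain the center, I would replace it by the division subring generated by $K$ and $F$ and verify that this enlargement is still proper. This reduction is the step I expect to require the most care, since in principle adjoining $F$ could generate all of $D$; however the local finiteness hypothesis makes this manageable.

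With $F\subseteq K$ in hand, Theorem~\ref{main theorem} applies and yields that $N\setminus K$ contains no Kurosh element. But by the observation above, every non-central element of $D$ is Kurosh, so every element of $N\setminus K$ must be central, i.e., must lie in $F\subseteq K$. This is a contradiction unless $N\setminus K=\emptyset$, and therefore $N\subseteq K$.

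To upgrade $N\subseteq K$ to $N\subseteq F$, I would reuse the opening move of the proof of Lemma~\ref{generalized Herstein}: a normal subgroup of $D^*$ lying inside a division subring $K$ forces, via the Herstein--Scott theorem \cite{Herstein-Scott63}, either $K=D$ or $K\subseteq F$; since $K$ is proper by hypothesis, we obtain $K\subseteq F$ and hence $N\subseteq F$, as desired. Apart from the initial reduction $F\subseteq K$, the argument is a brief chain of already-established results, with all the real content absorbed into Theorem~\ref{main theorem} and the description of Kurosh elements in Section~2.
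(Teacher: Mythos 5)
Your overall route coincides with the paper's: apply Theorem~\ref{main theorem} together with the observation that every non-central element of a locally finite division ring is Kurosh to force $N\subseteq K$, then invoke Herstein--Scott \cite{Herstein-Scott63} to get $K\subseteq F$ and hence $N\subseteq F$. The genuine gap is your preliminary reduction to the case $F\subseteq K$. The claim that the division subring generated by $K$ and $F$ is still proper is false in general, and local finiteness does not rescue it: take $D=\mathbb{H}$, the real quaternions, which is centrally (hence locally) finite with $F=\mathbb{R}$, and let $K=\mathbb{Q}\oplus\mathbb{Q}i\oplus\mathbb{Q}j\oplus\mathbb{Q}k$ be the rational quaternions. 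Then $K$ is a proper division subring of $D$, yet the division subring generated by $K$ and $F$ is all of $\mathbb{H}$. Note also that this failure cannot be argued away using the radicality hypothesis: if the enlargement equals $D$, then $N$ is trivially radical over it, Theorem~\ref{main theorem} yields no information, and your argument terminates without a conclusion. So the step you yourself flagged as ``requiring the most care'' is not merely delicate --- as stated, it is wrong.

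The correct repair is the opposite of enlarging $K$: observe that the hypothesis $F\subseteq K$ in Theorem~\ref{main theorem} is never used in its proof. That proof passes to a centrally finite division subring $D_x$ containing the alleged Kurosh element $x\in N\setminus K$, intersects to form $N_x=N\cap D_x$ and $K_x=K\cap D_x$, and applies Lemma~\ref{generalized Herstein}, which is stated for an arbitrary proper division subring of a centrally finite division ring with no containment of centers assumed (indeed $K_x$ need not contain $Z(D_x)$ even when $K\supseteq F$). This is implicitly how the paper's own proof of the corollary proceeds: it applies Theorem~\ref{main theorem} to the given $K$ directly, without any reduction. Your instinct that the hypothesis mismatch deserved attention was sound --- the paper is silently using a stronger form of its theorem --- but the resolution is to note the hypothesis is superfluous (or to re-run the theorem's proof verbatim for general $K$), not to replace $K$ by $F(K)$. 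The remainder of your argument, including the final Herstein--Scott step, matches the paper and is fine.
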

\begin{proof}Let $K$ be a proper division subring of $D$ such that $N$ is radical over $K$. Since $D$ is locally finite, every element of $D$ is then Kurosh. By Theorem \ref{main theorem}, $N\subseteq K$. By \cite{Herstein-Scott63}, either $K\subseteq F$ or $K=D$. Since $K\ne D$, we have that $K\subseteq F$. In particular, $N\subseteq F$.
\end{proof}

\begin{Cor}\label{generalize Herstein theorem 9}
Let $D$ be a division ring and $N$ a normal subgroup of $D$. If $N$ is radical over a division subring $K$ of $D$ then $K$ contains all torsion elements of $N$.
\end{Cor}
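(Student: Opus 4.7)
The plan is to reduce to the centrally finite setting already handled in Lemma \ref{generalized Herstein}, via the same localization template that proves Theorem \ref{main theorem}. Let $a$ be a torsion element of $N$. If $a$ is central ($a\in F$) then $a\in K$ under the standing hypothesis $F\subseteq K$ carried over from Theorem \ref{main theorem}; the substantive case is thus $a\notin F$, i.e.\ $a$ non-central.

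Since $a$ is a non-central torsion element of $D$, Proposition \ref{Kurosh torsion} identifies $a$ as a Kurosh element: there exists a centrally finite division subring $D_a\subseteq D$ containing $a$ as a non-central element, with center $F_a:=Z(D_a)$. I would then set
\[
N_a := N\cap D_a, \qquad K_a := K\cap D_a,
\]
and record two routine restriction facts. First, $N_a$ is normal in $D_a^*$: for $x\in D_a^*\subseteq D^*$ and $n\in N_a$, the conjugate $xnx^{-1}$ lies in $N$ by normality of $N$ in $D^*$ and in $D_a$ by closure. Second, $N_a$ is radical over $K_a$: for $n\in N_a$, radicality of $N$ over $K$ supplies $m$ with $n^m\in K$, and since $n^m\in D_a$ as well, $n^m\in K\cap D_a = K_a$.

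The final step is to apply Lemma \ref{generalized Herstein} to the centrally finite division ring $D_a$. If $K_a$ were a \emph{proper} division subring of $D_a$, the lemma would force $N_a\subseteq F_a$; but this contradicts $a\in N_a$ and $a\notin F_a$ (the latter by the choice of $D_a$). Hence $K_a = D_a$, so $D_a\subseteq K$, and in particular $a\in K$, as required.

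I do not expect a genuine obstacle here: the hard work is done by Proposition \ref{Kurosh torsion} (non-central torsion $\Rightarrow$ Kurosh) and Lemma \ref{generalized Herstein} (the centrally finite case), and the present corollary is essentially just the localization step of Theorem \ref{main theorem} specialised to a Kurosh element supplied by Proposition \ref{Kurosh torsion}. The only delicate point is the central torsion case, which relies on the implicit convention $F\subseteq K$ already present in Theorem \ref{main theorem}; without it a primitive root of unity in $F\setminus K$ would furnish an immediate counterexample.
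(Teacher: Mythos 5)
Your proposal is correct and is essentially the paper's own argument: the paper simply cites Proposition \ref{Kurosh torsion} to see the torsion element is Kurosh and then invokes Theorem \ref{main theorem}, whose proof is exactly the localization step ($N_a = N\cap D_a$, $K_a = K\cap D_a$, then Lemma \ref{generalized Herstein}) that you unfold inline. Your remark about central torsion elements is a fair point of care --- the paper's two-line proof glosses over this by tacitly carrying the hypothesis $F\subseteq K$ from Theorem \ref{main theorem} --- but it does not change the substance of the argument.
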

\begin{proof}
If $x$ is a torsion element of $N$ then $x$ is Kurosh by Theorem \ref{Kurosh torsion}. It follows from Theorem \ref{main theorem} that  $x\in K$.
\end{proof}

\section{An example}
In this section, we will follow the Mal'cev-Neumann construction of Laurent series rings to construct a non-trivial non-algebraic division ring $D$ whose all elements are non-algebraic Kurosh elements.

Let $G=\oplus_{i=1}^\infty\mathbb{Z}$ be the free abelian group of infinite rank. Let us order $G$ lexicographically by the rule that for any $(n_1,n_2,\cdots)$ and $(m_1,m_2,\cdots)$ in $G$, $(n_1,n_2,\cdots)<(m_1,m_2,\cdots)$ if and only if either $n_1<m_1$ or there exists $k$ such that $n_i=m_i$ for all $i=1,\cdots,k-1$ and $n_k<m_k$. Then with this order, $G$ is a totally well-ordered set.

For an increasing infinite sequence of primes $p_1<p_2<\cdots$, set $K:=\mathbb{Q}(\sqrt{p_1},\sqrt{p_2},\cdots)$ to be the subfield of $\mathbb{R}$ generated by $\mathbb{Q}$ and the $\sqrt{p_1}, \sqrt{p_2},\cdots$. Let $G$ act on $K$ by $\mathbb{Q}$-automorphisms as the following: for every $g=(n_1,n_2,\cdots)\in G$, $g$ fixes all rational numbers and for each $i$, $(\sqrt{p_i})^g=(-1)^{n_i}\sqrt{p_i}$.

From now, we write the operation of $G$ multiplicative. Define $D=K((G))$ to be the set of all formal sums $\alpha=\sum_{g\in G}\alpha_gg$ with $\textrm{supp}(\alpha):=\{g\in G : \alpha_g\ne 0 \}$ well-ordered. For $\alpha=\sum_{g\in G}\alpha_g g$ and $\beta=\sum_{g\in G}\beta_gg$, define the operations on $D$ as the following
$$\alpha+\beta =\sum_{g\in G} (\alpha_g+\beta_g)g$$
$$\alpha\cdot\beta=\sum_{g\in G}\left(\sum_{uv=g}\alpha_u\beta_v^u\right)g.$$
\begin{Th}{\cite[14.21]{Lam-first-course}}
The set $D=K((G))$ with two operations defined as above is a division ring.
\end{Th}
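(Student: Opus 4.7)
The plan is to verify, in order, that (i) the operations on $D$ are well-defined, (ii) they make $D$ into a ring, and (iii) every nonzero element of $D$ has a two-sided inverse. The key order-theoretic tool throughout is the pair of classical lemmas due to B.\,H.\ Neumann on ordered groups: if $A, B \subseteq G$ are well-ordered, then $A \cup B$ and $AB := \{ab : a \in A, b \in B\}$ are also well-ordered, and for each fixed $g \in G$ the set $\{(u,v) \in A \times B : uv = g\}$ is finite.

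Applying these lemmas, $\mathrm{supp}(\alpha + \beta) \subseteq \mathrm{supp}(\alpha) \cup \mathrm{supp}(\beta)$ is well-ordered, and the inner sum $\sum_{uv = g} \alpha_u \beta_v^u$ in the definition of $\alpha \cdot \beta$ is actually a finite sum in $K$, whose support lies in the well-ordered set $\mathrm{supp}(\alpha)\,\mathrm{supp}(\beta)$. Verifying the ring axioms then becomes bookkeeping: distributivity is immediate from the definitions, and associativity of multiplication uses crucially that $G$ acts on $K$ by field automorphisms, so that $(\lambda^u)^v = \lambda^{uv}$ for $\lambda \in K$ and $u,v \in G$; this is exactly the cocycle-free condition that makes the crossed-product multiplication formula associative.

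The crux of the proof, and the step I expect to be the main obstacle, is producing inverses. Given $0 \neq \alpha = \sum \alpha_g g$, let $g_0 := \min \mathrm{supp}(\alpha)$, which exists because $\mathrm{supp}(\alpha)$ is well-ordered. Write $\alpha = \alpha_{g_0} g_0 (1 - \gamma)$ where $\mathrm{supp}(\gamma)$ is a well-ordered subset $S$ of $G$ consisting entirely of elements strictly greater than the identity $e$. One then inverts $1 - \gamma$ by the formal geometric series $\beta := \sum_{n \geq 0} \gamma^n$, and the obstacle is to show that $\beta$ genuinely lies in $D$. This requires proving that $S^{\ast} := \bigcup_{n \geq 0} S^n$ is well-ordered in $G$ and that each $g \in S^{\ast}$ admits only finitely many factorizations as an ordered product of elements of $S$; together these guarantee both that $\mathrm{supp}(\beta) \subseteq S^{\ast}$ is well-ordered and that the coefficient of each $g$ in $\beta$ is a well-defined finite sum from $K$. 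These are precisely Neumann's substantive lemmas on ordered groups, and once they are in hand a direct computation gives $(1 - \gamma)\beta = \beta(1 - \gamma) = 1$. Consequently $\alpha^{-1} = \beta \cdot g_0^{-1} \cdot (\alpha_{g_0}^{-1})^{g_0^{-1}}$ exists in $D$, completing the proof.
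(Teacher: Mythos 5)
Your argument is correct and is essentially the proof the paper relies on: the paper itself gives no proof, quoting the result from Lam \cite[14.21]{Lam-first-course}, whose proof is exactly this Neumann-style argument (well-orderedness of unions and products of supports, finiteness of factorizations so the convolution coefficients are finite sums, and inversion of $1-\gamma$ via the geometric series $\sum_{n\ge 0}\gamma^n$ after factoring out the lowest term, with associativity resting on the action of $G$ on $K$ by automorphisms). The only blemish is a harmless bookkeeping slip in your final formula: since $(\lambda g)^{-1}=(\lambda^{-1})^{g^{-1}}g^{-1}$ under the paper's multiplication rule, the inverse should read $\alpha^{-1}=\beta\cdot(\alpha_{g_0}^{-1})^{g_0^{-1}}g_0^{-1}$ rather than $\beta\cdot g_0^{-1}\cdot(\alpha_{g_0}^{-1})^{g_0^{-1}}$, as moving the scalar to the right of $g_0^{-1}$ twists it by the action once more.
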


We are now going to compute the center $F$ of $D$. Let $H$ be the subgroup of $G$ consisting of all $g^2$ with $g\in G$ and let $\mathbb{Q}((H))$ be the set of all formal sum $\alpha=\sum_{h\in H}\alpha_hh$ with $\alpha_h\in\mathbb{Q}$ and $\textrm{supp}(\alpha)$ well-ordered.
\begin{Prop}\label{central}
The center $F$ of $D$ is $\mathbb{Q}((H))$.
\end{Prop}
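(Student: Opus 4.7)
The plan is to establish the two inclusions $\mathbb{Q}((H)) \subseteq F$ and $F \subseteq \mathbb{Q}((H))$ separately, anchored on the observation that $H$ is precisely the kernel of the action $G \to \textrm{Aut}(K)$: the element $g = (n_1, n_2, \ldots)$ acts trivially on $K$ if and only if each $n_i$ is even, if and only if $g \in H$.

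For the inclusion $\mathbb{Q}((H)) \subseteq F$, I would take any $\alpha = \sum_{h \in H} \alpha_h h$ with $\alpha_h \in \mathbb{Q}$ and any $\beta = \sum_{g \in G} \beta_g g \in D$, and compute both $\alpha\beta$ and $\beta\alpha$ from the twisted product formula. Because $\alpha_u$ vanishes off $H$ and every $h \in H$ acts trivially on $K$, one obtains $(\alpha\beta)_g = \sum_{h \in H} \alpha_h \beta_{h^{-1}g}$. Because each nonzero $\alpha_v \in \mathbb{Q}$ is fixed by all of $G$, one obtains $(\beta\alpha)_g = \sum_{h \in H} \beta_{gh^{-1}} \alpha_h$. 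The abelianness of $G$ and the commutativity of $K$ force the two sums to agree, so $\alpha$ is central.

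For the inclusion $F \subseteq \mathbb{Q}((H))$, let $\alpha = \sum_g \alpha_g g$ be central. I would test centrality against two families. First, against constants $k \in K \subset D$: the product formula gives $(\alpha k)_g = \alpha_g k^g$ and $(k\alpha)_g = k\alpha_g$, so centrality forces $\alpha_g k^g = k\alpha_g$ in $K$; since $K$ is commutative, this reads $k^g = k$ whenever $\alpha_g \neq 0$, placing the support of $\alpha$ inside $H$. Second, against pure group elements $g_0 \in G \subset D$: the formula gives $(\alpha g_0)_g = \alpha_{gg_0^{-1}}$ and $(g_0 \alpha)_g = \alpha_{g_0^{-1}g}^{g_0}$, so centrality together with $G$ being abelian collapses to $\alpha_{g'} = \alpha_{g'}^{g_0}$ for all $g', g_0 \in G$. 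Thus every coefficient of $\alpha$ lies in $K^G$.

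The remaining point is the identification $K^G = \mathbb{Q}$. Any $x \in K$ already lies in a finite subextension $L = \mathbb{Q}(\sqrt{p_{i_1}}, \ldots, \sqrt{p_{i_k}})$, which is Galois over $\mathbb{Q}$ with group $(\mathbb{Z}/2\mathbb{Z})^k$; because the coordinates $n_{i_1}, \ldots, n_{i_k}$ of an element of $G$ can be chosen independently, the restriction map $G \to \textrm{Gal}(L/\mathbb{Q})$ is surjective, so $L^G = \mathbb{Q}$ and hence $x \in \mathbb{Q}$. This completes the second inclusion. The main obstacle I expect is the clean bookkeeping of the twisted multiplication — in particular the substitution $u \leftrightarrow v$ that relies on both $G$ being abelian and $K$ being commutative — together with ensuring that the well-ordered-support condition is preserved throughout the manipulations.
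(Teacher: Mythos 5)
Your proof is correct and follows the same overall route as the paper: both inclusions are settled by direct coefficient comparison in the twisted product, using for $\mathbb{Q}((H))\subseteq F$ that $H$ acts trivially on $K$ and that rational coefficients are fixed by all of $G$, and for the converse testing a central $\alpha$ against elements of $K$ to force $\mathrm{supp}(\alpha)\subseteq H$ (the paper tests only against the $\sqrt{p_i}$, you against all $k\in K$; these are equivalent since the $\sqrt{p_i}$ generate the action and $H$ is exactly the kernel $\{g : n_i \text{ even for all } i\}$, as you note). Where you genuinely diverge --- to your advantage --- is the rationality of the coefficients. The paper tests $\alpha$ only against its own coefficients $\alpha_g\in K$, concluding that $\alpha_g$ is fixed by $g$ and hence rational; but at that point one already knows $g\in H$, and $H$ acts trivially on $K$, so ``fixed by $g$'' is vacuous and does not by itself give $\alpha_g\in\mathbb{Q}$. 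You instead test against all pure group elements $g_0\in G$, obtaining $\alpha_{g'}^{g_0}=\alpha_{g'}$ for every $g_0$, i.e.\ $\alpha_{g'}\in K^G$, and then you actually prove $K^G=\mathbb{Q}$ by descending to a finite Kummer subextension $L=\mathbb{Q}(\sqrt{p_{i_1}},\ldots,\sqrt{p_{i_k}})$ with $\mathrm{Gal}(L/\mathbb{Q})\cong(\mathbb{Z}/2\mathbb{Z})^k$, onto which $G$ surjects because the coordinates $n_{i_1},\ldots,n_{i_k}$ can be prescribed independently. That lemma is precisely what the paper's argument leaves implicit (or garbles), so your version is the more complete one; the extra cost is only the bookkeeping of the restriction map, and your concern about well-ordered supports is harmless here since all the test elements have singleton support and the general product is standard Mal'cev--Neumann.
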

\begin{proof} It is easily seen that $g^2\in F$ for every $g\in G$. For $\alpha=\sum_{h\in H}\alpha_hh\in H$ and $\beta=\sum_{g\in G}\beta_gg\in D$, notice that $\beta_g^h=\beta_g$  and $\alpha_h^g=\alpha^h$ for all $h\in H,g\in G$ since $\alpha_h\in\mathbb{Q}$. We have that
$$\alpha\cdot\beta=\sum_{g\in G}\left(\sum_{uv=g}\alpha_u\beta_v^u\right)g=\sum_{g\in G}\left(\sum_{uv=g}\alpha_u\beta_v\right)g$$
$$\beta\cdot\alpha=\sum_{g\in G}\left(\sum_{uv=g}\beta_u\alpha_v^u\right)g=\sum_{g\in G}\left(\sum_{uv=g}\alpha_u\beta_v\right)g$$
which imply that $\alpha\beta=\beta\alpha$. It holds for all $\beta\in G$, thus $\alpha\in F$. Conversely, assume that $\alpha=\sum_{g\in G}\alpha_gg\in F$. Then $\sqrt{p_i}\alpha=\alpha\sqrt{p_i}$ for every $i\geq 1$. This means that $\sqrt{p_i}g=g\sqrt{p_i}=(-1)^{g_i}\sqrt{p_i}g$ for every $g\in\textrm{supp}(\alpha)$. Hence $g_i$ is a square for every $i$, which implies that $g\in H$. Moreover, since $\alpha\in F$, for each $\alpha_g\ne 0$, we have that $\alpha_g\alpha=\alpha\alpha_g$. This implies that $\alpha_gg=g\alpha_g=\alpha_g^gg$. Therefore $\alpha_g$ is fixed by $g$. It follows that $\alpha_g\in\mathbb{Q}$ for all $g\in\textrm{supp}(\alpha)$. Hence $\alpha\in\mathbb{Q}((H))$.
\end{proof}

For each positive integer $i$, let $x_i$ be the element of $G$ with $1$ in the position $i$ and $0$ elsewhere. Then $x_i^{-1}<x_{i+1}^{-1}$. Consider the element $\alpha=x_1^{-1}+x_2^{-1}+\cdots$. Then $$\textrm{supp} (\alpha)=\{\, x_i^{-1}\mid i=1,2,\cdots\,\}$$ is well-ordered. It means $\alpha\in D$.
\begin{Lemma}
$\alpha$ is not algebraic over $F$.
\end{Lemma}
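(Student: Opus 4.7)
The plan is to suppose, for contradiction, that $\alpha$ satisfies a nontrivial polynomial relation $\sum_{k=0}^{n} f_{k} \alpha^{k} = 0$ with $f_{k} \in F$ and $f_{n} \neq 0$, and then to exhibit a single $G$-coefficient of the left-hand side that must be nonzero.

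First I would normalize the relation. By Proposition~\ref{central}, $F = \mathbb{Q}((H))$, where $H$ is the subgroup of $G$ consisting of elements all of whose coordinates are even. Let $h_{n}$ be the minimum of $\mathrm{supp}(f_{n})$; it exists because the support is well-ordered, and it lies in $H$. Since $H$ acts trivially on $K$ (each coordinate of an element of $H$ being even), multiplying the relation on the left by $h_{n}^{-1} \in F$ simply translates the support of each $f_{k}$ by $h_{n}^{-1}$. Hence I may assume that the identity element $e \in G$ lies in $\mathrm{supp}(f_{n})$, that is, $(f_{n})_{e} \neq 0$.

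Next I would fix $n$ distinct positive integers $i_{1}, \ldots, i_{n}$ and examine the coefficient of $g := x_{i_{1}}^{-1} x_{i_{2}}^{-1} \cdots x_{i_{n}}^{-1}$ on both sides of the relation. Because $G$ is abelian and all coefficients of $\alpha$ are rational (hence $G$-fixed), the multinomial expansion gives $(\alpha^{k})_{g} = n!$ if $k = n$ and $(\alpha^{k})_{g} = 0$ if $k < n$. Moreover, the twist in $D$ is trivial on the $\mathbb{Q}$-valued coefficients of $\alpha^{k}$, so $(f_{k} \alpha^{k})_{g} = \sum_{uv = g} (f_{k})_{u} (\alpha^{k})_{v}$, where $u$ ranges over $\mathrm{supp}(f_{k}) \subseteq H$ and $v$ over $\mathrm{supp}(\alpha^{k})$.

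The crucial step is a short support-and-parity analysis. The coordinates of $u$ are all even; those of $v$ are all non-positive with $\sum_{i} |v_{i}| = k$; and $uv = g$ forces $u_{i}+v_{i}$ to equal $-1$ at the positions $i_{1}, \ldots, i_{n}$ and $0$ elsewhere. A brief case distinction then shows $u$ must have non-negative coordinates with $\sum_{i} u_{i} = k-n$: this is impossible for $k < n$, while for $k = n$ it forces $u = e$, contributing exactly $(f_{n})_{e} \cdot n!$. Hence the coefficient of $g$ in $\sum_{k=0}^{n} f_{k} \alpha^{k}$ equals $(f_{n})_{e} \cdot n! \neq 0$ in characteristic zero, contradicting the relation. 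The only genuine difficulty is this parity-and-support bookkeeping; once the normalization and the test element $g$ are in hand, everything else is immediate.
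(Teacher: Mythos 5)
Your proposal is correct and takes essentially the same route as the paper: test the alleged relation $\sum_{k=0}^{n} f_k\alpha^k=0$ against the coefficient of a product of $n$ distinct $x_i^{-1}$'s, observe that only $\alpha^n$ can contribute there (with multiplicity $n!$), and conclude the top coefficient vanishes, which is absurd. Your two extra steps --- normalizing by $h_n^{-1}$ so that $(f_n)_e\neq 0$, and the even/odd support analysis over $H$ --- in fact make rigorous a point the paper's terser proof leaves implicit (it treats $a_n\in F=\mathbb{Q}((H))$ as a scalar in asserting the coefficient of $x$ is $a_n n!$, whereas naive extraction only gives $(a_n)_e\,n!$), so your write-up is, if anything, more complete than the original.
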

\begin{proof}
Consider the equality
$$a_0+a_1\alpha+a_2\alpha^2+\cdots+a_n\alpha^n=0$$
where $a_i\in F$ for each $i=0,\cdots, n$. The $x=x_1^{-1}\cdots x_n^{-1}$ does not appear in the expressions of $\alpha, \alpha^2,\cdots, \alpha^{n-1}$ and it appears in $\alpha^n$ with coefficients $n\!$. Hence the coefficient of $x$ in the left handside of the above equation is $a_nn!$. It follows that $a_n=0$. By induction, we have that $a_0=a_1=\cdots=a_n=0$. Thus the set $\{1,\alpha,\cdots,\alpha^n\}$ is independent for every $n$. Hence $\alpha$ is not algebraic over $F$.
\end{proof}

\begin{Lemma}
For $\alpha$ is as above, let $D_\alpha:=F(\alpha,\sqrt{p_1},\sqrt{p_2},\cdots)$ be the division subring of $D$ generated by $\alpha$ and the $\sqrt{p_i}$'s. Then the center of $D_\alpha$ equals to $F$ and $K_\alpha:=F(\sqrt{p_1},\sqrt{p_2},\cdots)$ is a maximal field of $D_\alpha$. 
\end{Lemma}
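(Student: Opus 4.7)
The plan is to prove maximality of $K_\alpha$ first, and then to derive $Z(D_\alpha)=F$ as a short corollary. A preliminary remark: $K_\alpha$ is a commutative subring of $D_\alpha$, because every $\sqrt{p_i}$ commutes with each $h\in H$ (the coordinate $h_i$ is even, so the $G$-action on $\sqrt{p_i}$ is trivial), hence with all of $F=\mathbb{Q}((H))$, and the $\sqrt{p_i}$'s commute pairwise. In a division ring, any subring strictly containing a subfield and still commuting with it must itself be a subfield, so maximality of $K_\alpha$ amounts to the equality $C_{D_\alpha}(K_\alpha)=K_\alpha$.

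The central tool will be a family of automorphisms of $D$ arising from Galois theory. For each $i\ge 1$, let $\sigma_i$ be the $\mathbb{Q}$-automorphism of $K$ that sends $\sqrt{p_i}\mapsto-\sqrt{p_i}$ and fixes every other $\sqrt{p_j}$. A routine check shows that $\sigma_i$ commutes with each $g\in G$ acting on $K$, so $\sigma_i$ extends to a ring automorphism $\bar\sigma_i$ of $D=K((G))$ by acting coefficientwise, $\bar\sigma_i\left(\sum_g z_g g\right)=\sum_g \sigma_i(z_g)g$. The crucial property is that $\bar\sigma_i$ fixes $F$ (rational coefficients), fixes $\alpha=\sum_j x_j^{-1}$ (each coefficient is $1$), and fixes every $\sqrt{p_j}$ with $j\ne i$.

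For the maximality step, I would take $z\in C_{D_\alpha}(K_\alpha)$. Since $z$ commutes with every $\sqrt{p_i}$, the same support calculation used in the proof of Proposition~\ref{central} gives $\textrm{supp}(z)\subseteq H$, so $z=\sum_{h\in H}z_h h$ with $z_h\in K$. Because division-ring operations are finitary and $D_\alpha$ is generated by the countable family $F\cup\{\alpha\}\cup\{\sqrt{p_i}\}_{i\ge 1}$, the subring decomposes as $D_\alpha=\bigcup_{n\ge 1}D_{\alpha,n}$, where $D_{\alpha,n}$ denotes the division subring generated by $F$, $\alpha$, and $\sqrt{p_1},\ldots,\sqrt{p_n}$; in particular $z\in D_{\alpha,n}$ for some $n$. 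For every $i>n$, the automorphism $\bar\sigma_i$ fixes every generator of $D_{\alpha,n}$, hence fixes $z$, and comparing coefficients yields $\sigma_i(z_h)=z_h$ for every $h$. Since each $z_h\in K$ lies in a finite subfield $\mathbb{Q}(\sqrt{p_{i_1}},\ldots,\sqrt{p_{i_r}})$ and is fixed by $\sigma_i$ for all $i>n$, it must belong to $\mathbb{Q}(\sqrt{p_1},\ldots,\sqrt{p_n})$; thus $z\in F(\sqrt{p_1},\ldots,\sqrt{p_n})\subseteq K_\alpha$, proving $C_{D_\alpha}(K_\alpha)=K_\alpha$.

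For the center, any $z\in Z(D_\alpha)$ centralizes $K_\alpha$ and hence lies in $K_\alpha$ by the previous step; write $z=\sum_{h\in H}z_h h$. Expanding $\alpha z=z\alpha$ in $D$ and matching coefficients of $x_i^{-1}h$ (these are distinct elements of $G$ as $i$ and $h$ vary, since $x_i^{-1}h$ has a unique odd coordinate, at position $i$) gives $(z_h)^{x_i^{-1}}=z_h$ for every $i$ and every $h$, so each $z_h$ is invariant under every sign-flip of $\sqrt{p_j}$, forcing $z_h\in\mathbb{Q}$ and $z\in\mathbb{Q}((H))=F$. The step I expect to require the most care is the verification that $\bar\sigma_i$ extends to a ring automorphism of $D$: the Galois-descent that isolates $K_\alpha$ inside $C_{D_\alpha}(K_\alpha)$ relies on this, and the compatibility $\sigma_i\circ g=g\circ\sigma_i$ for all $g\in G$ must be explicitly checked (it holds because the sign-flip $\sigma_i$ and the $G$-action decouple into independent $\mathbb{Z}/2$-actions on each factor $\mathbb{Q}(\sqrt{p_j})$).
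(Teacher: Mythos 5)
Your proof is correct, but on the key step it takes a genuinely different route from the paper's. The paper also reduces maximality to $C_{D_\alpha}(K_\alpha)=K_\alpha$ (via \cite[15.7]{Lam-first-course}, which you re-derive), but then argues by a direct commutator computation: if $a\in C_{D_\alpha}(K_\alpha)\setminus K_\alpha$, some $x_i$ appears in $a$, so $a=bx_i+c$ with $b\ne 0$ and $i$-th coordinates even throughout $b$ and $c$, giving $\sqrt{p_i}a-a\sqrt{p_i}=2b\sqrt{p_i}x_i\ne 0$, a contradiction. That is shorter than your Galois descent, but it silently assumes the converse implication: that an element of $D_\alpha$ whose support lies in $H$ must belong to $K_\alpha$. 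This is not automatic --- for instance $\sum_i\sqrt{p_i}\,x_i^{-2}$ is supported in $H$ and lies in $D=K((G))$ but not in $K_\alpha$ --- and excluding such elements from $D_\alpha$ is exactly the finiteness point your exhaustion $D_\alpha=\bigcup_n D_{\alpha,n}$ together with the coefficientwise automorphisms $\bar\sigma_i$ supplies; so your argument is more complete there. (Alternatively, the observation $D_{\alpha,n}\subseteq\mathbb{Q}(\sqrt{p_1},\dots,\sqrt{p_n})((G))$, a division subring of $D$ since this coefficient field is $G$-stable, would yield the same conclusion without the $\bar\sigma_i$.) Similarly, for the center the paper only gestures at ``the same arguments as Proposition~\ref{central}''; a literal repetition would test $z$ against arbitrary group elements $u\in G$, which need not lie in $D_\alpha$, whereas your test against $\alpha$ itself, using that $x_i^{-1}h$ has its unique odd coordinate at position $i$, stays inside $D_\alpha$ and is the right fix. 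Two small additions would make your write-up airtight: justify $z\in F(\sqrt{p_1},\dots,\sqrt{p_n})$ by decomposing $z=\sum_{S\subseteq\{1,\dots,n\}}\bigl(\prod_{j\in S}\sqrt{p_j}\bigr)w_S$ with each $w_S\in\mathbb{Q}((H))=F$ (its support is a subset of $\mathrm{supp}(z)$, hence well-ordered), and record the trivial inclusion $F\subseteq Z(D_\alpha)$, immediate from $F=Z(D)$ and $F\subseteq D_\alpha$, so that your containment $Z(D_\alpha)\subseteq F$ becomes the stated equality.
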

\begin{proof} Using the same arguments as in the proof of Proposition \ref{central}, we obtain that the center of $D_\alpha$ is $F$. 

In order to prove that $K_\alpha$ is a maximal subfield of $D_\alpha$, by {\cite[15.7]{Lam-first-course}}, we only need to prove that $C_{D_\alpha}(K_\alpha)=K_\alpha$. Let $a\in C_{D_\alpha}(K_\alpha)\setminus K_{\alpha}$. Then there exists some $i$ such that $x_i$ appears in the expression of $a$ as a formal sum. Since $x_i^2\in F$, $a$ can be expressed in the form $a=bx_i+c$, where $b\ne 0$ and $x_i$ does not appear in the formal expressions of $b$ and $c$. Therefore $\sqrt{p_i}a-a\sqrt{p_i}=2b\sqrt{p_i}x_i\ne 0$. It follows that $a$ does not commute with $\sqrt{p_i}\in K_\alpha$, which is a contradiction. Hence $K_\alpha$ is a maximal subfield of $D_\alpha$.
\end{proof}

\begin{Th}
The  ring $D_\alpha$ is a non-algebraic division ring whose every non-central element is Kurosh.
\end{Th}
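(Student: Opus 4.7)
The plan is to realize $D_\alpha$ as a directed union $\bigcup_{N\ge 1} D'_N$ of centrally finite division subrings, where $D'_N := F(\alpha,\sqrt{p_1},\ldots,\sqrt{p_N})$, and then to argue that every non-central element of $D_\alpha$ is already non-central in some $D'_N$.

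That $D_\alpha$ is not algebraic over $F$ is immediate from the earlier lemma, since $\alpha$ is not algebraic. For the Kurosh claim, the crucial calculation is that conjugation by $\sqrt{p_i}$ flips precisely the $i$-th coordinate in the series for $\alpha$: since $\sqrt{p_i}$ anticommutes with $x_i$ and commutes with every $x_j$ for $j\ne i$, one obtains
$$\sqrt{p_i}\,\alpha\,\sqrt{p_i}^{-1} = \alpha - 2x_i^{-1},$$
so that $x_i^{-1} = \tfrac{1}{2}\bigl(\alpha - \sqrt{p_i}\,\alpha\,\sqrt{p_i}^{-1}\bigr) \in D'_N$ for every $i \le N$. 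Consequently $x_1,\ldots,x_N$ and the tail $\alpha_N := \sum_{i>N} x_i^{-1} = \alpha - \sum_{i\le N} x_i^{-1}$ all lie in $D'_N$, and conversely $\alpha$ is recovered from these. Thus $D'_N$ coincides with the subring generated over $F(\alpha_N)$ by $x_1,\ldots,x_N$ and $\sqrt{p_1},\ldots,\sqrt{p_N}$. Because $\alpha_N$ has support disjoint from the first $N$ coordinates, it commutes with each of these generators, placing $F(\alpha_N)$ inside $Z(D'_N)$.

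Using the quaternion-type relations $x_i^2 \in F$ and $(\sqrt{p_i})^2 = p_i \in F$, together with the pairwise (anti)commutation of the generators, one checks that $D'_N$ is spanned over $F(\alpha_N)$ by the $4^N$ monomials $x_1^{a_1}\cdots x_N^{a_N}\sqrt{p_1}^{b_1}\cdots\sqrt{p_N}^{b_N}$ with $a_i, b_i \in \{0,1\}$. Hence $D'_N$ is finite-dimensional over the field $F(\alpha_N)$, and as a finite-dimensional domain inside the division ring $D$ it is itself a division ring. In particular $D'_N$ is a centrally finite division subring of $D_\alpha$.

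Finally, every element of $D_\alpha$ is constructed from $\alpha$ and only finitely many of the $\sqrt{p_i}$ by a finite sequence of ring operations, so $D_\alpha = \bigcup_{N\ge 1} D'_N$. Given a non-central $a \in D_\alpha\setminus F$, pick $b \in D_\alpha$ with $ab \ne ba$ (possible because $Z(D_\alpha) = F$ by the earlier proposition), and choose $N$ so large that both $a$ and $b$ lie in $D'_N$. Then $b$ witnesses that $a$ is non-central in the centrally finite division subring $D'_N$, so $a$ is Kurosh. The main technical obstacle is the spanning-and-inversion verification for $D'_N$: one must check that the listed $4^N$ monomials are closed under multiplication by the generators (modulo scalars from $F(\alpha_N)$) and then invoke the standard fact that a finite-dimensional domain over a field is automatically a division ring to obtain closure under inversion. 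Both verifications are routine given the explicit (anti)commutation relations at hand.
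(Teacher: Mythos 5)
Your proposal is correct and follows essentially the same route as the paper's own proof: it exhausts $D_\alpha$ by the subrings $D'_N=F(\alpha,\sqrt{p_1},\ldots,\sqrt{p_N})$, uses the central tail $\alpha_N=\alpha-x_1^{-1}-\cdots-x_N^{-1}$, and spans $D'_N$ by the $4^N$ monomials $x_1^{a_1}\cdots x_N^{a_N}\sqrt{p_1}^{b_1}\cdots\sqrt{p_N}^{b_N}$ to conclude central finiteness. If anything, your write-up is more careful than the paper's at two points: the identity $\sqrt{p_i}\,\alpha\,\sqrt{p_i}^{-1}=\alpha-2x_i^{-1}$ justifying $x_i^{-1}\in D'_N$ (which the paper uses implicitly), and the enlarge-$N$ step guaranteeing that a given non-central $a$ is non-central \emph{in} $D'_N$ --- a check the paper's proof skips entirely, and which is genuinely needed since, e.g., $\alpha_N$ is central in $D'_N$ but not in $D_\alpha$.
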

\begin{proof}
For each $z\in D_{\alpha}$, there are only finitely many primes occuring in the expression of $z$, say $p_1,\cdots, p_n$. Then $z\in D_n:=F(\alpha,\sqrt{p_1},\cdots,\sqrt{p_n})$. It is easy to see that $F(\alpha-x_1^{-1}-x_2^{-1}-\cdots-x_n^{-1})$ is contained in the central $F_n$ of $D_n$. Since $$D_n=F(\alpha-x_1^{-1}-x_2^{-1}-\cdots-x_n^{-1})((x_1,\cdots,x_n,\sqrt{p_1},\cdots,\sqrt{p_n})) $$  $$=F_n(x_1,\cdots,x_n,\sqrt{p_1},\cdots,\sqrt{p_n}).$$ Using the fact that every element of $D_n$ can be written as form $$\epsilon x_1^{s_1}x_2^{s_2}\cdots x_n^{s_n}\sqrt{p_1}_1^{t_1}\sqrt{p_2}^{t_2}\cdots \sqrt{p_n}^{t_n},$$ where $\epsilon\in F_n$, $s_1,s_2,\cdots, s_n, t_1, t_2,\cdots, t_n$ are $0$ or $1$. It follows that $D_n$ is centrally finite over its center $F_n$, which implies that $z$ is Kurosh.
\end{proof}

We now provide an example of a division ring $D$ with  a proper subring $H$ and a normal subgroup $N$ of $D$ such that $N$ radical over $H$ but $N$ is not commutative.

Let $D$ be a division ring with  center $F$, $(G,\prec)$ an ordered abelian (multiplicative) group, and $v:D^*\to G$ a valuation of $D^*$; that is, $v$ satisties (1) $v(a.b)=v(a)v(b),$ for any $a,b\in D^*$ and (2) $v(a+b)\ge \min\{v(a),v(b)\}, $ for each $ a,b\in D^*, a+b\ne 0$. Choose $v$ such that there exist $x,y\in D^*$ with $x\notin F, v(x)<1, v(y)<v(x^n), \forall n\in \bb{Z}$.  We get  $D=K((G))$ as in Theorem 4.1. Put $\alpha=\sum_{g\in G}\alpha_gg$, we have  $v(\alpha)=\min\sup(\alpha)$, $x=x_2^{-1},$ and $y=x_1^{-1}$. Put $H=\{a\in D^*\mid \exists n\in \bb{Z}, v(a)\ge v(x^n)\}\cup \{0\}$.
\begin{Th} With the above notations, we have that $H$ is a proper subring of $D$, $H^*$is normal in $D^*$ and $H^*$ is not commutative.
\end{Th}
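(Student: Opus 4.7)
The plan is to verify the three assertions about $H$ in turn, each being a short deduction from the valuation-theoretic set-up. First I would show $H$ is a subring properly contained in $D$ by invoking the standard valuation inequalities: $v(ab) = v(a)v(b)$ gives closure under multiplication (with $n_1 + n_2$ replacing the two witnessing $n_i$), while $v(a+b) \geq \min\{v(a), v(b)\}$ combined with the fact that $v(x) < 1$ forces the sequence $v(x^n)$ to be strictly decreasing in $n$ (so that $\min\{v(x^{n_1}), v(x^{n_2})\} = v(x^{\max(n_1,n_2)})$) gives closure under addition; closure under negation follows from $v(-a) = v(a)$. That $H \neq D$ is immediate from the hypothesis on $y$: the condition $v(y) < v(x^n)$ for every $n \in \mathbb{Z}$ shows $y \in D^* \setminus H$.

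Next I would check that $H^*$ is normal in $D^*$. The decisive point is that $G$ is abelian, so valuation is invariant under conjugation: for any $d \in D^*$ and $h \in H^*$,
\[
v(d h d^{-1}) = v(d)\, v(h)\, v(d)^{-1} = v(h),
\]
so a witness $n$ for $h \in H$ also witnesses $d h d^{-1} \in H$; applying the same reasoning to $h^{-1}$ via the identity $(d h d^{-1})^{-1} = d h^{-1} d^{-1}$ yields $d h d^{-1} \in H^*$. Hence $d H^* d^{-1} \subseteq H^*$ for every $d \in D^*$.

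Finally I would exhibit two non-commuting elements of $H^*$, taking $x = x_2^{-1}$ and $s = \sqrt{p_2}$. Both elements and both their inverses have valuations in $\{\,v(x^n) : n \in \mathbb{Z}\,\}$, so $x, s \in H^*$. Applying the twisted multiplication rule in $K((G))$ together with the $G$-action $(\sqrt{p_j})^g = (-1)^{g_j} \sqrt{p_j}$, one finds $s \cdot x = \sqrt{p_2}\, x_2^{-1}$ whereas $x \cdot s = (\sqrt{p_2})^{x_2^{-1}} x_2^{-1} = -\sqrt{p_2}\, x_2^{-1}$, so $x \cdot s = -s \cdot x \neq s \cdot x$ and $H^*$ is non-commutative. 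The only mildly subtle step is this last non-commutativity check: one must choose a $\sqrt{p_j}$ whose index matches the non-zero coordinate of $v(x)$ so that the twisting by $x$ is non-trivial; everything else reduces to direct verification of valuation inequalities.
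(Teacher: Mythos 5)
Your proposal is correct, and the first two parts (subring/properness via the valuation inequalities, normality via conjugation-invariance of $v$ since $G$ is abelian) coincide with the paper's argument, modulo your harmless extra precision in identifying $\min\{v(x^{n_1}),v(x^{n_2})\}=v(x^{\max(n_1,n_2)})$, which the paper does not need. Where you genuinely diverge is the non-commutativity step. The paper does not exhibit non-commuting elements at all: it observes that $x_2,x_2^{-1}\in H$, so $x_2\in H^*\setminus F$, and then invokes Scott \cite[14.4.4]{S88} (a normal subgroup of $D^*$ that is commutative must lie in the center $F$) to conclude that the normal subgroup $H^*$, not being contained in $F$, cannot be commutative. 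Your route instead computes directly in the Mal'cev--Neumann ring: $\sqrt{p_2}\in H^*$ since $v(\sqrt{p_2})=1=v(x^0)$, and the twisted multiplication gives $x_2^{-1}\cdot\sqrt{p_2}=(\sqrt{p_2})^{x_2^{-1}}x_2^{-1}=-\sqrt{p_2}\,x_2^{-1}\ne\sqrt{p_2}\cdot x_2^{-1}$, and your choice of index $2$ to match the support of $v(x)$ is exactly the right twist to make this non-trivial. The trade-off: your argument is self-contained, constructive, and avoids the external citation, but it is tied to the concrete model $D=K((G))$ with its specific $G$-action; the paper's argument uses only the abstract data ($H^*$ normal, $x\in H^*\setminus F$), so it would survive in the general valuation setting sketched before the theorem, where no explicit multiplication rule is available. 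Both proofs are valid for the statement as posed.
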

\begin{proof} We first prove that $H$ is a subring of $D$. For any $a,b\in H$, we have that $v(a,b)=v(a)v(b)\geq v(x^n)v(x^m)$ for some $m,n\in\bb{Z}$. Hence $v(ab)\geq v(x^nx^m)=v(x^{n+m})$, which implies that $ab\in H$. Assume that $a+b\ne 0$. Then $v(a+b)\geq \min\{v(a),v(b)\}\geq\min\{v(x^n),v(x^m)\}$. It follows that $a+b\in H$. Moreover, $v(y)<v(x^n), \forall n\in \bb{Z}$, $y\notin H$ implies that $H$ is a proper subring of $D$. 

Now we check that $H^*$ is normal in $D^*$. For any element $a\in D^*, b\in H$, we have that $v(a^{-1}ba)=v(a^{-1})v(b)v(a)=v(b)$, and so $a^{-1}ba\in H$. Therefore, $a^{-1}Ha\subseteq H$. As a corollary, $a^{-1}Ha=H$. It implies that $H^*=(a^{-1}Ha)^*=aH^*a^{-1}$ which means $H^*$ is normal in $D^*$. Notice that $x_2, x^{-1}_2\in H$, it follows that $x_2\in H^*$. On the other hand, $x_2\notin F$. Hence $H^*$ is not contained in $F$. By \cite[14.4.4]{S88}, $H^*$ is not commutative. 
\end{proof}

\subsection*{Acknowledgment} The authors are very thankful to the referee for carefully reading our paper and making
useful comments and recommendations. In addition, the first author is also extremely grateful for the support given by the Vietnam National Foundation for Science and Technology Development (NAFOSTED).

\end{document}